\def\H_0{\mathcal{H}_0(T)}
\def\X{{\mathcal X}}
\def\ST{\tau_{AB}}
\def\X{{\cal X}}
\def\Y{{\cal Y}}
\def\H{{\cal H}}
\newtheorem{df}{Definition}[section]
\newtheorem{thm}[df]{Theorem}
\newtheorem{pro}[df]{Proposition}
\newtheorem{cor}[df]{Corollary}
\newtheorem{rema}[df] {Remark}
\newtheorem{lem}[df] {Lemma}
\def\sfstp{{\hskip-1em}{\bf.}{\hskip1em}}
\def\enddemo{\qed \endtrivlist}
\let\csname enddemo*\endcsname=\enddemo
\def\qedsymbol{\ifmmode\bgroup\else$\bgroup\aftergroup$\fi
  \vcenter{\hrule\hbox{\vrule
height.6em\kern.6em\vrule}\hrule}\egroup}
\def\qed{\ifmmode\else\unskip\nobreak\fi\quad\qedsymbol}
\begin{document}
\title
{ \bf Generalized Browder's theorem for \\tensor product
and elementary operators\/}

\author {\normalsize Enrico   Boasso, B. P.  Duggal }

\date{ }

%%%%%%%%%%%%%%%%%%%%%%%%%%%%%%%%%%%%%%%%%%%%%%%%%%%%%%%%%%%%%%%%%%%%%%%%%%%%%%%%%%%%
%%%%%%%%%%%%%%%%%%%%%%%%%%%%%%%%%%%%%%%%%%%%%%%%%%%%%%%%%%%%%%%%%%%%%%%%%%%%%%%%%%%%

\maketitle \thispagestyle{empty} %\baselineskip=14pt

%\subject{enrico\_odisseo@yahoo.it (E. Boasso),bpduggal@yahoo.co.uk (B. P. Duggal).}

\vskip 1truecm

\setlength{\baselineskip}{12pt}

%%%%%%%%%%%%%%%%%%%%%%%%%%%%%%%%%%%%%%%%%%%%%%%%%%%%%%%%%%
\begin{abstract} \noindent The transfer property for the generalized Browder's theorem both of the tensor product and
of the left-right multiplication operator will be characterized in
terms of the $B$-Weyl spectrum inclusion. In addition, the isolated
points of these two classes of operators will be fully
characterized.\par
\noindent \it Keywords: \rm Browder's and generalized Browder's theorem, tensor product
operator, elementary operator, Drazin inverse, spectrum.

\end{abstract}

%%%%%%%%%%%%%%%%%%%%%%%%%%%%%%%%%%%%%%%%%%%%%%%%%%%%%%%%%%%%%%%%

\section {\sfstp Introduction}\setcounter{df}{0}
\ \indent In the recent past the relationship between, on the one
hand, Weyl and Browder's theorems and their generalizations and, on
the other, tensor products and elementary operators has been
intensively studied, see for example \cite{SK, AD, HK, KD, D2, DDK,
DHK, BDJ}. In particular,  given two operators that satisfy
Browder's theorem, it is proved in \cite{DDK} that a necessary and
sufficient condition   for the tensor product operator to satisfy Browder's
theorem is that the Weyl spectrum identity holds, see the latter
cited article or section  4.\par

\indent The main objective of this work is to characterize when
given two operators that satisfy the generalized Browder's theorem, the
tensor product operator also satisfies the generalized Browder's
theorem, using in particular the $B$-Weyl spectrum identity.
Furthermore, since one inclusion always holds for operators
satisfying the generalized Browder's theorem, it is enough to consider
the $B$-Weyl spectrum inclusion, see section 4. It is worth noticing
that since Browder's and the generalized Browder's theorem are
equivalent (\cite{AZ}), the results of this work also provide a
characterization for the transfer property of the Browder's theorem
for the tensor product operator.\par

\indent However, to prove the key characterization of section 4, the
set of isolated points of the tensor product operator need to be
studied. In particular, after section 2 where several basic
definitions and facts will be recalled,  the poles and the
complement of the poles in the isolated points of the tensor product
operator will be characterized in terms of the corresponding sets of
the source operators. It is important to note that these results
continue and deepen the characterization of the isolated points of
the tensor product operator presented in \cite{HK}, see section
3.\par \indent Finally, since the  same arguments can be applied to
the left-right multiplication operator, similar characterizations
will be proved for elementary operators.
\newpage

\section {\sfstp Preliminary definitions}\setcounter{df}{0}
 \hskip.5truecm

\indent From now on $\X$ and $\Y$ shall denote infinite dimensional
complex Banach spaces and $B(\X, \Y)$ the algebra of all bounded
linear maps defined on $\X$ and with values in $\Y$. As usual, when
$\X=\Y$, $B(\X,\X)=B(\X)$. Given $A\in B(\X)$, $N(A)$, $R(A)$,
$\sigma (A)$ and $\sigma_a(A)$ will stand for the null space, the
range, the spectrum and the approximate point spectrum of $A$
respectively. In addition, $\X^*$ will denote the dual space of
$\X$, and if $A\in \X$, then $A^*\in B(\X^*)$ will stand for the
adjoint map of $A$. \par

\indent Recall that $A\in B(\X)$  is said to be  a
\it Weyl  operator\rm, if the dimensions both of $N(A)$ and of $\X/R(A)$
are finite and equal. Let $\sigma_w(A)$ be the \it
Weyl spectrum of $A$\rm, i.e.,
$\sigma_w(A)=\{ \lambda\in\mathbb C\colon A-\lambda\hbox{ is not Weyl}\}$,
where $A-\lambda$ stands for $A-\lambda I$, $I$ the identity map of $\X$.
Note, in addition, that the concept of Weyl operator has been generalized recently.
 An operator $A\in B(\X)$ will be said to be \it
B-Weyl\rm, if there exists $n\in\mathbb
N$ for which the range of $R(A^n)$ is closed and the induced
operator $A_n\in B(R(A^n))$ is Weyl (\cite{B3}). It is worth noticing that if for some $n\in\mathbb N$, $A_n\in
B(R(A^n))$ is Weyl, then $A_m\in B(R(A^m))$ is Weyl for
all $m\ge n$ (\cite{B1}). Naturally, from this class of
operators the B-Weyl spectrum of $A\in B(\X)$ can be derived in the usual way;
this spectrum will be denoted by
$\sigma_ {BW}(A)$. \par

\indent On the other hand, a Banach space operator $A\in B(\X)$ is said to be \it Drazin invertible\rm,
if there exists a necessarily unique $B\in B(\X)$ and some $m\in \mathbb N$ such that
$$
A^m=A^mBA, \hskip.3truecm BAB=B, \hskip.3truecm AB=BA.
$$
 If $DR(B(\X))=\{ A\in B(\X)\colon
A\hbox{ is Drazin invertible} \}$, then the \it Drazin spectrum \rm of
$A\in B(\X)$ is the set $\sigma_{DR}(A)=\{\lambda\in \mathbb C\colon
A-\lambda\notin DR(B(\X) )\}$ (\cite{BS, Bo}). \par

\indent The \it ascent \rm  (respectively \it the descent\rm ) of
$A\in B(\X)$ is the smallest non-negative integer $a=asc (A)$
(respectively $d=dsc (A)$) such that $N(A^a)=N(A^{a+1})$
(respectively $R(A^d)=R(A^{d+1})$); if such an integer does not
exist, then $asc(A)=\infty$ (respectively $dsc(A)=\infty$). Recall
that $\lambda\in \sigma (A)$ is said to be \it a pole \rm of $A$, if
the ascent and the descent of $A-\lambda$ are finite (hence equal).
The set of poles of $A\in B(\X)$ will  be denoted by $\Pi (A)$. Note
that $\Pi (A)=\sigma (A)\setminus \sigma_{DR}(A)$ (\cite[Theorem
4]{K}). In particular, if $A\in B(\X)$ is quasi-nilpotent, then
according to \cite[Theorem 5]{K}, necessary and sufficient for $A$
to be nilpotent is that $\Pi(A)=\{0\}$. In addition, the set of \it
poles of finite rank \rm of $A$ is the set $\Pi_0(A)=\{\lambda\in
\Pi(A)\colon \alpha(A-\lambda)<\infty\}$, where $\alpha(A-\lambda)=
\rm{dim}N(A-\lambda)$.

\indent Recall that an operator $A\in B(\X)$ is said  to satisfy \it Browder's theorem\rm,
 if $\sigma_{w}(A)=\sigma(A)\setminus \Pi_0(A)$, while $A$ is said to satisfy the
\it generalized Browder's theorem\rm, if  $\sigma_{BW}(A)=\sigma(A)\setminus \Pi(A)=\sigma_{DR}(A)$.
According to \cite[Theorem 2.1]{AZ}, the Browder's and the generalized Browder's theorems are equivalent.
Moreover, according to \cite[Theorem 2.1(iv)]{CH}, the generalized Browder's theorem is equivalent to
the fact that acc $\sigma(A)\subseteq \sigma_{BW} (A)$. Here and elsewhere in this article,
for $K\subseteq  \mathbb C$, iso $K$ will stand for the set of isolated points of
$K$ and acc $K=K\setminus$ iso $K$ for the set of limit points of $K$. The generalized Browder's theorem was
studied in \cite{A, AZ, CH, D, BDJ}.\par

\indent In what follows, given Banach spaces $\X$ and $\Y$, $\X\overline{\otimes}\Y$ will stand for
the completion, endowed with a reasonable uniform cross-norm, of the
algebraic tensor product $\X\otimes \Y$ of $\X$ and $\Y$. In addition,
if $A\in B(\X)$ and $B\in B(\Y)$, then $A\otimes B\in B(\X\overline{\otimes}\Y)$ will denote
the tensor product operator defined by $A$ and $B$.\par

\indent On the other hand, $\ST\in B(B(\Y, \X))$ will denote the
multiplication operator defined by $A\in B(\X)$ and $B\in B(\Y)$,
i.e., $\ST (U)=AUB$, where $U\in B(\Y, \X)$ and $\X$ and $\Y$ are
two Banach spaces. Note that $\ST=L_AR_B$, where $L_A\in  B(B(\Y,
\X))$ and $R_B\in  B(B(\Y, \X))$ are the left and right
multiplication operators defined by $A$ and $B$ respectively,
i.e., $L_A(U)=AU$ and $R_B(U)=UB$, $U\in  B(\Y, \X)$.\par

 \section {\sfstp The isolated points }\setcounter{df}{0}
\
\indent In this section the isolated points both of the tensor product and of the left-right
multiplication operator will be studied. To this end, some preparation is needed.\par

\begin{rema}\label{rem7} \rm Let $\X$ be a Banach space, consider $A\in B(\X)$ and
set $I(A)=$ iso $\sigma (A)\setminus \Pi(A)$. \par
\noindent (i) Necessary and sufficient for $\lambda\in \sigma(A)$ to belong to $I(A)$ is that
there exist $M$ and $N$, two closed and complemented subspaces of $\X$ invariant for $A$, such that
if $A_1=A\mid_M$ and $A_2=A\mid_N$, then $A_1-\lambda$ is quasi-nilpotent but not nilpotent and $A_2-\lambda$ is invertible.
Note that $\sigma (A)= I(A)=\{\lambda\}$ if and only if $N=0$.\par

\noindent (ii)  Let $\lambda\in \sigma(A)$. The complex number $\lambda$ belongs to
$\Pi(A)$ if and only if there are $M'$ and $N'$ two closed and complemented
subspaces of $\X$ invariant for $A$, such that if
$A'= A\mid_{M'}$
and $A''=A\mid_{N'}$, then $A'-\lambda$
is nilpotent and $A''-\lambda$ is invertible. As in statement (i), $\sigma (A)=\Pi (A)=\{\lambda\}$ is equivalent to the
fact that $N'=0$.\par
\noindent Statements (i)-(ii) are well known and they can be easily deduced  from
\cite[Theorem 12]{Bo} and \cite[Theorem 5]{K}. Now let $\Y$ be a Banach space and consider $B\in B(\Y)$.\par
\noindent (iii) Since $\sigma (A\otimes B)=\sigma(A)\sigma(B)=\sigma (\ST)$ (\cite[Theorem 2.1]{I} and \cite[Corollary 3.4]{E}),
according to \cite[Theorem 6]{HK},
$$
(\hbox{iso }\sigma(A\otimes B))\setminus \{ 0\}= (\hbox{iso } (\ST))\setminus \{ 0\}=
(\hbox{iso } \sigma(A)\setminus \{ 0\})( \hbox{iso } \sigma(B)\setminus \{ 0\}).
$$
\noindent (iv) Set
$$
\mathbb L= (I(A)\setminus \{ 0\}) (I(B)\setminus \{ 0\})\cup  (I(A)\setminus \{ 0\}) (\Pi (B)\setminus \{ 0\})
\cup  (\Pi (A)\setminus \{ 0\}) (I(B)\setminus \{ 0\}).
$$
Then clearly, (iso $\sigma(A\otimes B))\setminus \{ 0\}=(\hbox{iso }
(\ST))\setminus \{ 0\}= \mathbb L\cup (\Pi (A)\setminus \{ 0\})(\Pi
(B)\setminus \{ 0\})$.\par

\noindent (v) Let $\lambda\in$ (iso
$\sigma (A\otimes B))\setminus\{0\}=(\hbox{iso } (\ST))\setminus \{
0\}$. Then, it is not difficult to prove that there exist finite
sequences $\{\mu_i\}_{i=1}^n$ and $\{\nu_i\}_{i=1}^n$ of points
$\mu_i\in$ iso $\sigma(A)\setminus \{ 0\}$ and $\nu_i\in$ iso
$\sigma(B)\setminus \{ 0\}$ such that $\lambda=\mu_i \nu_i$ for all
$i=1,\dots , n$.\par

\noindent (vi) Note that  if $0\in $ iso $\sigma (A\otimes B)=$
iso $\sigma (\ST)$, then one of the following possibilities holds:\par
\noindent (a) $\sigma(A)=\{0\}$ or $\sigma(B)=\{0\}$;\par
\noindent (b) ($\sigma(A)\neq\{0\}$ and $\sigma(B)\neq\{0\}$)
$0\in $ iso $\sigma(A)$ and $0\notin\sigma (B)$ or
$0\notin\sigma(A)$ and $0\in$ iso $\sigma(B)$;\par
\noindent (c) ($\sigma(A)\neq\{0\}$, $\sigma(B)\neq\{0\}$, $0\in \sigma(A)\cap\sigma(B)$)
 $0\in$ iso $\sigma(A)\cap$ iso $\sigma(B)$.
\end{rema}

\indent In the next theorem the position of $0\in\mathbb C$ in the isolated points will be characterized.
To this end, if $\X$ and $\Y$ are two Banach spaces, then $I_1$ and $I_2$ will denote the identity map on $\X$ and $\Y$ respectively.
Moreover, given $x\in \X$ and $f\in \Y^*$, $U_{x,f}\in B(\Y,\X)$ is the map defined as follows:
$U_{x,f}(y)=xf(y)$, $y\in \Y$.\par

\begin{thm}\label{thm8} Let $\X$ and $\Y$ be two Banach spaces and consider
$A\in B(\X)$, $B\in B(\Y)$,  $A\otimes B\in B(\X\overline{\otimes}\Y)$ and $\ST\in B(B(\Y,\X))$.
Suppose that $0\in \hbox{\rm iso }\sigma(A\otimes B)= \hbox{\rm iso }\sigma (\ST)$.\par
\noindent \rm (i)\it If $\sigma(A)=\Pi(A)=\{0\}$ or $\sigma(B)=\Pi(B)=\{0\}$, then $\sigma(A\otimes B)=\Pi (A\otimes B)=\{0\}=\Pi(\ST)=\sigma(\ST)$.\par
\noindent \rm (ii) If  $\sigma(A)=I(A)=\{0\}$ and $B$ is not nilpotent or $\sigma(B)=I(B)=\{0\}$ and $A$ is not nilpotent,
then $\sigma (A\otimes B)=I(A\otimes B)=\{0\}=I(\ST)=\sigma(\ST)$.\par
\noindent \rm (iii)\it If $0\in \Pi(A)$ and $0\notin \sigma(B)$ or $0\notin \sigma(A)$ and $0\in\Pi(B)$,
 then $0\in\Pi (A\otimes B)\cap\Pi(\ST)$.\par
\noindent \rm (iv)\it If $0\in I(A)$ $(\sigma (A)\neq \{0\})$ and $0\notin \sigma(B)$ or $0\notin \sigma(A)$ and $0\in I(B)$ $(\sigma (B)\neq\{0\})$,
 then $0\in I(A\otimes B)\cap I(\ST)$.\par
\noindent \rm (v)\it If $0\in \Pi (A)\cap \Pi (B)$, then $0\in \Pi (A\otimes B)\cap\Pi (\ST)$.\par
\noindent \rm (vi)\it If $0\in I(A)\cap \Pi (B)$ and $B$ is not nilpotent, $0\in \Pi (A)\cap I (B)$ and  $A$ is not nilpotent,
or $0\in  I(A)\cap I (B)$, then $0\in I(A\otimes B)\cap I(\ST)$.\par
\end{thm}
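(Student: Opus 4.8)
The plan is to treat the six cases uniformly by reducing each statement to a question about the ascent, descent, and quasi-nilpotency of $A\otimes B$ and $\ST$ at the point $0$, and then to decode these via the structural characterizations in Remark \ref{rem7}(i)--(ii). The organizing principle is that, since $\sigma(A\otimes B)=\sigma(A)\sigma(B)=\sigma(\ST)$, the hypothesis $0\in\hbox{iso }\sigma(A\otimes B)$ tells us how $0$ arises as a product of spectral points of $A$ and $B$; Remark \ref{rem7}(vi) already enumerates the possibilities (a), (b), (c), and the six cases of the theorem are simply these possibilities refined by whether the relevant $0$-spectral point lies in $\Pi$ or in $I$. So the first step I would take is to match each case (i)--(vi) to its configuration in Remark \ref{rem7}(vi), which fixes exactly the local spectral picture I must analyze.

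For statement (i) I would argue that $\sigma(A)=\Pi(A)=\{0\}$ forces, by Remark \ref{rem7}(ii) with $N'=0$, that $A$ is nilpotent, say $A^k=0$; then $(A\otimes B)^k=A^k\otimes B^k=0$, so $A\otimes B$ is nilpotent and hence $\sigma(A\otimes B)=\Pi(A\otimes B)=\{0\}$ by \cite[Theorem 5]{K}. The same nilpotency argument handles $\ST$ via $\ST^k=L_A^kR_B^k$ and $L_A^k(U)=A^kU=0$. Statement (v) is the analogous but more delicate pole computation: from $0\in\Pi(A)\cap\Pi(B)$ I would use Remark \ref{rem7}(ii) to split $\X=M'\oplus N'$ and $\Y=P'\oplus Q'$ with $A-0$ nilpotent on $M'$, invertible on $N'$, and similarly for $B$; the induced decomposition of $\X\overline{\otimes}\Y$ into four tensor summands lets me read off that $A\otimes B$ is nilpotent on the summand $M'\overline{\otimes}P'$ and invertible on the complementary summand, giving $0\in\Pi(A\otimes B)$, and the parallel four-fold decomposition of $B(\Y,\X)$ through the maps $U_{x,f}$ does the same for $\ST$.

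Statements (ii), (iii), (iv), (vi) then follow the same decomposition template but with one factor contributing a genuinely quasi-nilpotent-but-not-nilpotent piece (an $I$-point) rather than a nilpotent piece (a $\Pi$-point). The crucial technical input I expect to lean on is that the tensor product of a quasi-nilpotent operator with an invertible (or merely nonzero-spectrum) operator is again quasi-nilpotent but \emph{not} nilpotent, which is exactly what distinguishes membership in $I(A\otimes B)$ from membership in $\Pi(A\otimes B)$; preservation of quasi-nilpotency under tensoring follows from the spectral radius formula and $\sigma(A\otimes B)=\sigma(A)\sigma(B)$, while the failure of nilpotency is where the hypotheses ``$B$ is not nilpotent'' in (ii) and (vi) do their work, since $(A\otimes B)^k=A^k\otimes B^k$ vanishes only if some factor power vanishes. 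I would verify in each case that the nilpotent-versus-quasinilpotent status of the restriction of $A\otimes B-0$ to the relevant invariant summand lands it in $\Pi$ or in $I$ as claimed, and then transcribe the entire argument to $\ST$ using $\ST=L_AR_B$ together with the direct-sum decomposition of $B(\Y,\X)$ induced by the invariant splittings of $\X$ and $\Y$.

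The main obstacle will be the faithful transfer of the tensor-product reasoning to the elementary operator $\ST$: whereas $\sigma(A\otimes B)=\sigma(\ST)$ is given, the internal structure (invariant subspace decompositions, nilpotency orders, and especially the persistence of quasi-nilpotency without nilpotency) must be re-established intrinsically for $\ST=L_AR_B$ on $B(\Y,\X)$ rather than merely inherited from the spectral identity. The rank-one maps $U_{x,f}$ introduced before the statement are the right tool here, since they let me exhibit explicit elements witnessing that $\ST-0$ is not nilpotent on the appropriate summand; getting the bookkeeping of $L_A$, $R_B$ and their commuting product to mirror the clean four-block tensor decomposition, while keeping the nilpotency exponents aligned, is the step that will require the most care.
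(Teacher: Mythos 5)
Your plan reproduces the paper's proof in all but one place: cases (i) and (iii) via nilpotency and Drazin invertibility, and cases (ii), (iv), (vi) via the splittings of Remark \ref{rem7}(i)--(ii) together with the observation that the quasi-nilpotent block ($A_1\otimes B_2$, resp.\ the block of $\ST$ acting on $B(N_2,M_1)$) is not nilpotent. Your criterion that $(A\otimes B)^k=A^k\otimes B^k=0$ forces $A^k=0$ or $B^k=0$ is exactly the paper's argument with norm-one vectors, and it is valid precisely because the cross-norm is reasonable and uniform, so $\|A^kx\otimes B^ky\|=\|A^kx\|\,\|B^ky\|$; on the multiplication side the paper implements your ``$U_{x,f}$ as witness'' idea by choosing $f_k\in\Y^*$ with $(B^*)^k f_k\ne 0$, using that $B$ not nilpotent implies $B^*$ not nilpotent. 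One bookkeeping remark: the four-fold decomposition of $B(\Y,\X)$ comes from the projections attached to the invariant splittings of $\X$ and $\Y$ (giving the diagonal blocks $(\ST)_{ii}$), not ``through the maps $U_{x,f}$''; those rank-one operators enter only as non-nilpotency witnesses in (ii), (iv), (vi).

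The one genuine defect is in your case (v). With $\X=M'\oplus N'$ and $\Y=P'\oplus Q'$ you claim $A\otimes B$ is ``nilpotent on the summand $M'\overline{\otimes}P'$ and invertible on the complementary summand.'' That is false: on the mixed blocks $M'\overline{\otimes}Q'$ and $N'\overline{\otimes}P'$ the restriction is (nilpotent)$\otimes$(invertible), resp.\ (invertible)$\otimes$(nilpotent), hence nilpotent, not invertible, so the complement of $M'\overline{\otimes}P'$ is in general not a subspace on which $A\otimes B$ is invertible. The repair is immediate: group $M'\overline{\otimes}P'\oplus M'\overline{\otimes}Q'\oplus N'\overline{\otimes}P'$ into the nilpotent part (a finite direct sum of nilpotent blocks of bounded exponent is nilpotent) and keep $N'\overline{\otimes}Q'$ as the invertible part, then apply Remark \ref{rem7}(ii); the same regrouping fixes the four-block picture of $B(\Y,\X)$ for $\ST$. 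Note that the paper sidesteps this bookkeeping entirely in (v): it observes that $A\otimes I_2$ and $I_1\otimes B$ (resp.\ $L_A$ and $R_B$) are commuting Drazin invertible operators and invokes \cite[Proposition 2.6]{BS}. Your corrected direct decomposition is equally valid and more self-contained, and it is in fact the method the paper itself uses in case (vi), where the mixed blocks are correctly recorded as nilpotent.
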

\begin{proof} (i). According to Remark \ref{rem7}(ii), $A$ or $B$ is nilpotent, which implies that $A\otimes B$
is nilpotent.\par
\indent On the other hand, since $L_A\in B(B(\X))$ or $R_B\in B(B( \Y))$ is nilpotent, $\ST$ is nilpotent.\par

\noindent (ii). Suppose that $\sigma(A)=I(A)=\{0\}$ and $B$ is not nilpotent. Clearly, $\sigma (A\otimes B)=\{0\}$.
In addition, according to Remark \ref{rem7}(i), $A$ is not  nilpotent.  In particular,
for each $k\in \mathbb N$ there exist $x_k\in \X$ and $y_k\in \Y$
such that $\parallel A^k(x_k)\parallel=1$ and $\parallel B^k(y_k)\parallel=1$.
Therefore, since $\X\overline{\otimes}\Y$ is endowed with a reasonable uniform cross norm,
 $\parallel (A\otimes B)^k (x_k\otimes y_k)\parallel =1$, for each $k\in\mathbb N$.
As a result, $A\otimes B$ is not nilpotent, equivalently $I(A\otimes B)=\{0\}$. \par

\indent On the other hand, it is clear that $\sigma(\ST)=\{0\}$. Moreover, since $B$ is not nilpotent,
$B^*\in B(\Y^*)$ is not nilpotent. In particular, for each $k\in \mathbb N$ there exist
$x_k\in \X$ and $f_k\in \Y^*$ such that $\parallel A^k(x_k)\parallel=1$ and $\parallel (B^*)^k(f_k)\parallel=1$.
Consider $U_{x_k,f_k}\in B(\Y,\X)$. Then, $\parallel\ST^k(U_{x_k,f_k})\parallel=1$. Consequently, $\ST$ is not
nilpotent and $I(\ST)=\{0\}$.\par

\indent The remaining case can be proved in a similar way.\par

\noindent (iii). If $0\in \Pi(A)$ and $0\notin \sigma(B)$ or $0\notin \sigma(A)$ and $0\in\Pi(B)$,
then it is not difficult to prove that $A\otimes B$ and $\ST$ are Drazin invertible,
equivalently $0\in\Pi (A\otimes B)\cap\Pi(\ST)$.\par

\noindent (iv)  If $0\in I(A)$, then, according to Remark \ref{rem7}(i), there exist $M_1$ and $M_2$
two closed and complemented subspaces of $\X$ invariant for $A$ such that
$A_1\in B(M_1)$ is quasi-nilpotent but not nilpotent and $A_2\in B(M_2)$ is invertible,
where $A_1=A\mid_{M_1}$ and $A_2=A\mid_{M_2}$. Now, clearly
$\X\overline{\otimes}\Y= M_1\overline{\otimes} \Y\oplus M_2\overline{\otimes} \Y$,
$A_1\otimes B$ is quasi-nilpotent and, since $M_2\neq 0$ $(\sigma (A)\neq\{0\})$, $A_2\otimes B$
is invertible. However, using an argument similar to the one in the proof of statement (ii), $A_1\otimes B$
is not nilpotent. Consequently, according to Remark  \ref{rem7}(i), $0\in I(A\otimes B)$.\par

\indent To prove that $0\in I(\ST)$, consider the decompositions of $X$ and $A$ recalled in the previous
paragraph. Note that $B(\Y,\X)=B(\Y,M_1)\oplus B(\Y, M_2)$ and then, decomposing $\ST$
as a block operator, $\ST$ is a diagonal operator with entries $\tau_{A_1B}\in B( B(\Y,M_1))$
and $\tau_{A_2B}\in B( B(\Y,M_2))$. Clearly,  $\tau_{A_1B}$ is quasi-nilpotent and  $\tau_{A_2B}$
is invertible. However, using an argument similar to the one in the proof of statement (ii),
$\tau_{A_1B}$ is not nilpotent. In particular, $0\in I(\ST)$.\par

\indent The remaining case  can be proved in a similar way.\par

\noindent (v). If $0\in \Pi (A)\cap \Pi (B)$, then $A$ and $B$ are Drazin invertible,
which implies that $A\otimes I_2$ and $I_1\otimes B$ are Drazin invertible.
Since  $A\otimes I_2$ and $I_1\otimes B$  commute, according to \cite[Proposition 2.6]{BS},
$A\otimes B$ is Drazin invertible, equivalently $0\in \Pi (A\otimes B)$.\par

\indent On the other hand, it is not difficult to prove that $L_A$ and $R_B$ are
Drazin invertible. Moreover, since $L_A$ and $R_B$ commute, $\ST$ is Drazin invertible,
in particular $0\in \Pi(\ST)$.\par

\noindent (vi). If $0\in I(A)\cap \Pi (B)$, then, according to Remark \ref{rem7}(i)-(ii),
there exist $M_1$ and $M_2$ (respectively
$N_1$ and $N_2$) two closed and complemented subspaces
of $X$ (respectively $Y$) invariant for $A$ (respectively $B$)
such that $A_1$ is quasi-nilpotent but not nilpotent and $A_2$
is invertible (respectively $B_1$ is nilpotent and $B_2$ is invertible),
where $A_1=A\mid_{M_1}$ and $A_2=A\mid_{M_2}$
(respectively $B_1=B\mid_{N_1}$ and $B_2=B\mid_{N_2}$).
Now, it is clear that $\X\overline{\otimes}\Y= M_1\overline{\otimes}N_1\oplus M_2\overline{\otimes} N_1\oplus M_1\overline{\otimes}N_2
\oplus M_2\overline{\otimes} N_2$, $A_1\otimes B_1\in B(M_1\overline{\otimes}N_1)$ and
$A_2\otimes B_1\in B(M_2\overline{\otimes}N_1)$ are nilpotent,
$A_1\otimes B_2\in B(M_1\overline{\otimes}N_2)$ is quasi-nilpotent and $A_2\otimes B_2\in
B(M_2\overline{\otimes} N_2)$ is invertible. As a result,
to prove that $0\in I(A\otimes B)$, it is enough to prove that $A_1\otimes B_2\in B(M_1\overline{\otimes}N_2)$
is not nilpotent. However, since $B$ is not nilpotent, $N_2\neq 0$, and then, using the argument
in the proof of statement (ii), $A_1\otimes B_2$ is not nilpotent.  \par

\indent On the other hand, according to the decomposition of $\X$ and $\Y$ recalled in the previous paragraph,
$\ST\in B(B(\Y,\X))$ can be considered as a diagonal  operator with   diagonal entries $(\ST)_{11}\in B(B(N_1,M_1))$,
$(\ST)_{22}\in B(B(N_2,M_1))$, $(\ST)_{33}\in B(B(N_1,M_2))$ and $(\ST)_{44}\in B(B(N_2,M_2))$.
Clearly, $(\ST)_{11}$ and $(\ST)_{33}$ are nilpotent, $(\ST)_{44}$ is invertible and $(\ST)_{22}$ is quasi-nilpotent.
Thus, to prove that $0\in I(\ST)$, it is enough to prove that $(\ST)_{22}$ is not nilpotent.
However, since $N_2\neq 0$ and $B_2$ is invertible, using the argument  in the proof
of statement (ii), $(\ST)_{22}\in B(B(N_2,M_1))$
is not nilpotent. \par

\indent Similar arguments prove the remaining cases both for $A\otimes B$ and for $\ST$.
\end{proof}

\indent The following proposition will be useful to study  the isolated non null-points.\par

\begin{pro}\label{thm28} Let $\X$ and $\Y$ be two Banach spaces and suppose that
$A\in B(\X)$ and $B\in B(\Y)$ are such that $\sigma (A)= \{\mu\}$, $\sigma(B)=\{\nu \}$,
$\mu\nu\neq 0$. Consider $A\otimes B\in B(\X\overline{\otimes}\Y)$ and $\ST\in B(B(\Y,\X))$.
Then, $\sigma (A\otimes B)=\sigma (\ST)=\{\mu\nu\}$ and the following statements hold.\par
\noindent \rm (i)\it If $A-\mu$ and $B-\nu$ are nilpotent, then
$A\otimes B-\mu\nu$ and $\ST-\mu\nu$ are nilpotent.\par
\noindent \rm (ii)\it If either $A-\mu$ or $B-\nu$ is quasi-nilpotent but not nilpotent,
then $A\otimes B-\mu\nu$ and $\ST-\mu\nu$ are not nilpotent.
\end{pro}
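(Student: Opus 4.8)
The plan is to reduce everything to two elementary ingredients: a binomial expansion of the $k$-th power using a \emph{commuting} splitting, and a separation lemma that lets a single functional pick out one term of that expansion. First I would record, via Remark~\ref{rem7}(iii), that $\sigma(A\otimes B)=\sigma(A)\sigma(B)=\{\mu\nu\}=\sigma(\ST)$, so after subtracting $\mu\nu$ both operators are quasi-nilpotent and the only issue in (i)–(ii) is \emph{nilpotency}. Writing $P=A-\mu$ and $Q=B-\nu$ (both quasi-nilpotent, and with $A,B$ invertible since $\mu\nu\neq0$), the computational backbone is the pair of commuting decompositions
$$
A\otimes B-\mu\nu=\nu\,(P\otimes I)+A\otimes Q=(P\otimes B)+\mu\,(I\otimes Q),
$$
whose summands commute because $P,Q$ commute with $A,B$; the analogues for the multiplication operator are $\ST-\mu\nu=\nu L_P+L_AR_Q=L_PR_B+\mu R_Q$, using $L_A-\mu=L_P$, $R_B-\nu=R_Q$ and that every $L$ commutes with every $R$.

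For (i), with $P^{p}=0$ and $Q^{q}=0$, I would expand $(A\otimes B-\mu\nu)^{p+q}$ binomially; each term is a multiple of $(P^{j}A^{p+q-j})\otimes Q^{p+q-j}$, and in it either $j\ge p$ (killing $P^{j}$) or $j<p$, whence $p+q-j>q$ (killing $Q^{p+q-j}$). So the power vanishes, and the identical bookkeeping on $\ST-\mu\nu=\nu L_P+L_AR_Q$ gives nilpotency of $\ST-\mu\nu$.

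The heart is (ii), and \textbf{the main obstacle is that non-nilpotency cannot be obtained from a leading-term norm estimate}: both summands in each decomposition are quasi-nilpotent (so the norms of all their powers tend to $0$), and two commuting quasi-nilpotents can well sum to a nilpotent (e.g.\ $S+(-S)$), so some structural input is needed to forbid cancellation. That input is the following separation lemma, which I would isolate first: \emph{if $T\in B(\mathcal Z)$ is quasi-nilpotent, $k\in\mathbb{N}$ and $z\neq0$, then $z\notin\mathrm{span}\{T^{m}z:1\le m\le k\}$.} Indeed, a relation $z=q(T)z$ with $q(0)=0$ would give $(I-q(T))z=0$; but $q(T)$ is quasi-nilpotent by the spectral mapping theorem, so $I-q(T)$ is invertible and $z=0$, a contradiction. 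Hence (standard linear algebra plus Hahn--Banach) there is $\phi\in\mathcal Z^{*}$ with $\phi(z)=1$ and $\phi(T^{m}z)=0$ for $1\le m\le k$.

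With the lemma available I would treat the case $P$ not nilpotent (the case of $Q$ being symmetric). Expanding $(A\otimes B-\mu\nu)^{k}$ from $\nu(P\otimes I)+A\otimes Q$ and pairing with $x\otimes y$ and $f\otimes g$ (a bounded functional for a reasonable uniform cross-norm) gives
$$
\sum_{j=0}^{k}\binom{k}{j}\nu^{j}\,\langle A^{k-j}P^{j}x,f\rangle\,\langle Q^{k-j}y,g\rangle .
$$
Applying the lemma to $Q$, I choose $y,g$ with $\langle y,g\rangle=1$ and $\langle Q^{m}y,g\rangle=0$ for $1\le m\le k$; this annihilates every term except $j=k$, leaving $\nu^{k}\langle P^{k}x,f\rangle$, which is nonzero for suitable $x,f$ since $P^{k}\neq0$ and $\nu\neq0$. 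Thus $(A\otimes B-\mu\nu)^{k}\neq0$ for all $k$. For $\ST$ the same scheme runs verbatim after testing on the rank-one maps $U_{x,f}$ and pairing the output through $U\mapsto\langle U(y),h\rangle$: the identity $P^{j}U_{x,f}S=U_{P^{j}x,\,S^{*}f}$ converts the expansion of $(\ST-\mu\nu)^{k}$ into the same double sum, which the lemma concentrates on one nonvanishing term. When instead $Q$ is the non-nilpotent factor, I would use the conjugate decompositions $P\otimes B+\mu(I\otimes Q)$ and $L_PR_B+\mu R_Q$, so that the separation lemma is applied to $P$ and the surviving term contains $Q^{k}\neq0$.
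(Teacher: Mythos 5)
Your proposal is correct, but part (ii) runs along a genuinely different route from the paper's. The paper also works with a commuting splitting, but it first multiplies $A\otimes B-\mu\nu$ by the commuting invertible $I_1\otimes B^{-1}$ to arrive at $(A-\mu)\otimes I_2-\mu\nu\otimes(B^{-1}-\nu^{-1})$; this forces it to prove the auxiliary claim that $B^{-1}-\nu^{-1}$ is quasi-nilpotent but not nilpotent (via an algebraicity argument), and it then resolves the cancellation problem you rightly flag \emph{approximately} rather than exactly: since $\sigma_a(A)=\sigma(A)=\{\mu\}$, there are unit approximate eigenvectors $x_n$ with $(A-\mu)x_n\to 0$, so in the binomial expansion of the $k$-th power every cross term (each carrying a factor $(A-\mu)^jx_n$, $j\ge 1$) is made of total norm $<1$, while the pure term $(-\mu\nu)^k\,x_n\otimes(B^{-1}-\nu^{-1})^k y_k$ is normalized to have cross-norm $2$, whence $\|(\cdot)^k(x_n\otimes y_k)\|>1$; for $\ST$ the same estimate is run on the rank-one maps $U_{x_n,f_k}$ using that $(B^{-1}-\nu^{-1})^*$ is not nilpotent. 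Your separation lemma (for quasi-nilpotent $T$ and $z\neq 0$, $z\notin\operatorname{span}\{T^mz:1\le m\le k\}$, by spectral mapping and invertibility of $I-q(T)$, then Hahn--Banach) instead kills every cross term \emph{exactly}, and this buys real simplifications: no inversion of $B$, no auxiliary quasi-nilpotent-not-nilpotent claim, no use of the approximate point spectrum or $\varepsilon$-bookkeeping, and a symmetric treatment of the two cases ($P$ or $Q$ non-nilpotent) by conjugate splittings. What the paper's quantitative version buys in exchange is an explicit norm lower bound along test vectors, a device whose pattern it reuses in the proof of Theorem~\ref{thm8}(ii),(iv),(vi), where the relevant factor need not have one-point spectrum. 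Your part (i) and the spectral identity $\sigma(A\otimes B)=\sigma(\ST)=\{\mu\nu\}$ coincide with the paper's argument (sum of commuting nilpotents), modulo your equivalent choice of decomposition.
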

\begin{proof}Clearly $\sigma(A\otimes B)=\sigma (\ST)=\sigma(A)\sigma(B)=\{\mu\nu\}$. \par

\noindent (i). Note that
$A\otimes B-\mu\nu=(A-\mu)\otimes B+\mu\otimes (B-\nu)$.
Since $(A-\mu)\otimes B$ and $\mu\otimes (B-\nu)$ are nilpotent and commute,
an easy calculation proves that $A\otimes B-\mu\nu$ is nilpotent.\par
\indent On the other hand, since $\ST-\mu\nu= L_{(A-\mu)}R_B + \mu R_{(B-\nu)}$,
a similar argument proves that $\ST-\mu\nu$ is nilpotent.\par
\noindent (ii) Since $A\otimes B-\mu\nu=(A-\mu)\otimes B+\mu\otimes (B-\nu)$,
it is not difficult to prove that
$$
(A\otimes B-\mu\nu)I_1\otimes B^{-1}=I_1\otimes B^{-1}(A\otimes B-\mu\nu)=
(A-\mu)\otimes I_2 - \mu\nu\otimes (B^{-1}-\nu^{-1}).
$$
Moreover, since $A\otimes B-\mu\nu$ and $I_1\otimes B^{-1}$ commute,
$A\otimes B-\mu\nu$ is nilpotent if and only if $(A\otimes B-\mu\nu)I_1\otimes B^{-1}$ is nilpotent. \par

\indent Suppose that $(B-\nu)\in B(\Y)$ is quasi-nilpotent but not
nilpotent. Then, $(B^{-1}-\nu^{-1})\in B(\Y)$ is quasi-nilpotent but
not nilpotent. In fact, it is clear that $\sigma
(B^{-1})=\{\nu^{-1}\}$. In addition, if $B^{-1}-\nu^{-1}$ were
nilpotent, then a straightforward calculation proves that $B$ must
be algebraic. However, since $B-\nu$ is quasi-nilpotent, $B-\nu$ must
be nilpotent, which is impossible.\par

\indent Next note that since $\sigma_a(A)=\sigma (A)=\{\mu\}$, there exists
$(x_n)_{n\in\mathbb N}\subset \X$ such that $\parallel x_n\parallel=1$,
$n\in\mathbb N$, and $((A-\mu)(x_n))_{n\in\mathbb N}$ converges to $0\in\X$.
Then, given $k\in \mathbb N$, $c_{k,j}=\frac{k!}{(k-j)!j!}$ and $y_k\in \Y$ such that $\mid \mu\nu\mid^k \parallel (B^{-1}-\nu^{-1})^k(y_k)\parallel=2$, there exist $n_k\in \mathbb N$
such that for all $n\in\mathbb N$, $n\ge n_k$, $\parallel \sum_{j=1}^k c_{k,j} (-\mu\nu)^{k-j}(A-\mu)^j(x_n)\otimes (B^{-1}-\nu^{-1})^{k-j}(y_k)\parallel<1$.
As a result,  for $n\ge n_k$,
$$
\parallel ((A-\mu)\otimes I_2 - \mu\nu\otimes (B^{-1}-\nu^{-1}))^k(x_n\otimes y_k)\parallel > 1.
$$
Therefore,  $A\otimes B-\mu\nu$ is not nilpotent.  A similar
argument, using   $A\otimes B-\mu\nu= A\otimes (B-\nu)
+(A-\mu)\otimes \nu$, proves the case $A-\mu$ quasi-nilpotent but
not nilpotent for the tensor product operator.\par

\indent On the other hand, since $\ST-\mu\nu= L_{(A-\mu)}R_B + \mu R_{(B-\nu)}$, adapting the argument used before
it is not difficult to prove that
$\ST-\mu\nu$ is not nilpotent if and only if $L_{(A-\mu)}-\mu\nu R_{(B^{-1}-\nu^{-1})}$ is not nilpotent.
To prove this latter fact, consider the same sequence $(x_n)_{n\in\mathbb N}\subset \X$ of the tensor product
operator case. In addition, since $(B^{-1}-\nu^{-1})^*\in B(\Y^*)$ is not nilpotent,
for each $k\in\mathbb N$ there exists $f_k\in \Y^*$ such that
$\mid \mu\nu\mid^k \parallel ((B^{-1}-\nu^{-1})^*)^k(f_k)\parallel=2$. However,
an argument similar to the one used in  the tensor product operator case
proves that there is $n\in\mathbb N$ such that
$$
\parallel (L_{(A-\mu)} - \mu\nu\otimes R_{(B^{-1}-\nu^{-1})})^k(U_{x_n,f_k})\parallel > 1.
$$
Therefore,  $\ST-\mu\nu$ is not nilpotent.  A similar argument,
using  $\ST-\mu\nu= L_AR_{(B-\nu)} +\nu L_{(A-\mu)}$, proves the
case $A-\mu$ is quasi-nilpotent but not nilpotent for the
multiplication operator.
 \end{proof}

\indent Given $\X$ and $\Y$ two Banach spaces and $A\in B(\X)$
and $B\in B(\Y)$, in \cite[Theorem 6]{HK} the limit and the isolated points both of the tensor product operator
$A\otimes B\in B(\X\overline{\otimes}\Y)$ and of the elementary operator $\ST\in B(B(\Y,\X))$ were studied. In the following theorem
$I(A\otimes B)\setminus \{0\}$, $I(\ST)\setminus \{0\}$, $\Pi (A\otimes B)\setminus \{0\}$ and $\Pi(\ST)\setminus \{0\}$
will be characterized in terms of the corresponding sets of $A$ and $B$.
\par
\begin{thm}\label{thm9}Let $\X$ and $\Y$ be two Banach spaces and consider
$A\in B(\X)$ and $B\in B(\Y)$. Then, the following statements hold.\par
\noindent \rm (i) \it $\mathbb L= I(A\otimes B)\setminus \{0\}=I(\ST)\setminus \{0\}$.\par
\noindent \rm (ii) \it $\Pi (A\otimes B)\setminus \{0\}= \Pi(\ST)\setminus \{0\}=(\Pi (A)-\{0\})(\Pi (B)-\{0\})\setminus \mathbb L$.
\end{thm}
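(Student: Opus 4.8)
The plan is to treat each nonzero isolated point $\lambda$ of $\sigma(A\otimes B)=\sigma(\ST)$ separately, by cutting $\X$ and $\Y$ into Riesz spectral pieces adapted to the factorizations $\lambda=\mu\nu$, and to reduce the pole/non-pole dichotomy on each resulting pure piece to Proposition~\ref{thm28}. Fix $\lambda\in\iso\sigma(A\otimes B)\setminus\{0\}$. Since $\sigma(A\otimes B)=\sigma(A)\sigma(B)$, every factorization $\lambda=\mu\nu$ with $\mu\in\sigma(A)$, $\nu\in\sigma(B)$ forces $\mu\in\iso\sigma(A)\setminus\{0\}$ and $\nu\in\iso\sigma(B)\setminus\{0\}$ (a non-isolated $\mu$ would give $\mu_k\nu\to\lambda$ with $\mu_k\nu\in\sigma(A\otimes B)$, contradicting isolation of $\lambda$), and by Remark~\ref{rem7}(v) there are only finitely many such pairs. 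Let $\Lambda\subseteq\iso\sigma(A)$ and $M\subseteq\iso\sigma(B)$ be the finite sets of first and second coordinates occurring, let $X_\mu$ be the spectral subspace of $A$ for $\{\mu\}$ ($\mu\in\Lambda$), and write $\X=\big(\bigoplus_{\mu\in\Lambda}X_\mu\big)\oplus X_0$ with $\sigma(A\mid_{X_0})=\sigma(A)\setminus\Lambda$; define $Y_\nu$, $Y_0$ analogously for $B$.

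Next I would use the induced decomposition $\X\overline{\otimes}\Y=\bigoplus_{\alpha,\beta}X_\alpha\overline{\otimes}Y_\beta$ (with $\alpha\in\Lambda\cup\{0\}$, $\beta\in M\cup\{0\}$), on which $A\otimes B$ acts blockwise as $(A\mid_{X_\alpha})\otimes(B\mid_{Y_\beta})$ with spectrum $\sigma(A\mid_{X_\alpha})\sigma(B\mid_{Y_\beta})$. A direct check from the definitions of $\Lambda$ and $M$ shows that $\lambda$ lies in such a spectrum only on the pure pieces $X_\mu\overline{\otimes}Y_\nu$ with $\mu\in\Lambda$, $\nu\in M$, $\mu\nu=\lambda$ (where it is the whole spectrum), while on every piece meeting $X_0$ or $Y_0$ the operator $A\otimes B-\lambda$ is invertible. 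Hence the spectral subspace of $A\otimes B$ for $\{\lambda\}$ is the finite sum $\bigoplus_{\mu\nu=\lambda}X_\mu\overline{\otimes}Y_\nu$, and by Remark~\ref{rem7}(ii) applied to $A\otimes B$ the point $\lambda$ is a pole precisely when $A\otimes B-\lambda$ is nilpotent on this sum, whereas $\lambda\in I(A\otimes B)$ precisely when that restriction is quasi-nilpotent but not nilpotent.

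Since a finite direct sum is nilpotent iff each summand is, Proposition~\ref{thm28} now reads the conditions off coordinatewise: on $X_\mu\overline{\otimes}Y_\nu$ the operator $(A\mid_{X_\mu})\otimes(B\mid_{Y_\nu})-\mu\nu$ is nilpotent iff both $A\mid_{X_\mu}-\mu$ and $B\mid_{Y_\nu}-\nu$ are nilpotent, i.e.\ (Remark~\ref{rem7}(i)--(ii)) iff $\mu\in\Pi(A)$ and $\nu\in\Pi(B)$, and it is quasi-nilpotent but not nilpotent iff at least one of $\mu\in I(A)$, $\nu\in I(B)$ holds. Therefore $\lambda\in\Pi(A\otimes B)$ iff \emph{every} factorization $\lambda=\mu\nu$ is a pole--pole factorization, which is exactly $\lambda\in(\Pi(A)\setminus\{0\})(\Pi(B)\setminus\{0\})\setminus\mathbb{L}$; and $\lambda\in I(A\otimes B)$ iff \emph{some} factorization has a non-pole factor, which is exactly $\lambda\in\mathbb{L}$. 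This gives (i) and (ii) for $A\otimes B$; alternatively (i) follows from (ii) together with Remark~\ref{rem7}(iv), since $\iso\sigma(A\otimes B)\setminus\{0\}=\mathbb{L}\cup(\Pi(A)\setminus\{0\})(\Pi(B)\setminus\{0\})$ and a short set computation gives $(\mathbb{L}\cup P)\setminus(P\setminus\mathbb{L})=\mathbb{L}$ with $P=(\Pi(A)\setminus\{0\})(\Pi(B)\setminus\{0\})$.

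The same scheme applies verbatim to $\ST$: under the induced decomposition $B(\Y,\X)=\bigoplus_{\alpha,\beta}B(Y_\beta,X_\alpha)$ the operator $\ST$ acts blockwise as $\tau_{(A\mid_{X_\alpha})(B\mid_{Y_\beta})}$, and Proposition~\ref{thm28} covers these blocks as well, so the pole/non-pole analysis is identical. I expect the main obstacle to be the bookkeeping of the second step: one must verify that the finitely many factorization pairs are genuinely isolated, that the mixed blocks involving $X_0$ or $Y_0$ really omit $\lambda$ from their spectra (so that the $\{\lambda\}$-spectral subspace is exactly the sum of pure pieces), and that a pole of the direct sum corresponds to \emph{simultaneous} nilpotency on all pure blocks. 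It is precisely the passage from the universal quantifier (all pairs pole--pole, yielding a pole) to the existential one (some pair with a non-pole factor, yielding a non-pole isolated point) that separates $\Pi(A\otimes B)\setminus\{0\}$ from $\mathbb{L}$.
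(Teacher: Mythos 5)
Your proposal is correct and follows essentially the same route as the paper: both isolate the finitely many factorization pairs via Remark~\ref{rem7}(v), decompose $\X$ and $\Y$ into Riesz spectral subspaces so that $A\otimes B-\lambda$ (respectively $\ST-\lambda$) is invertible on all mixed blocks, and then apply Proposition~\ref{thm28} together with Remark~\ref{rem7}(i)--(ii) on the pure blocks $X_\mu\overline{\otimes}Y_\nu$ with $\mu\nu=\lambda$ to separate the pole case (all pairs pole--pole) from the $\mathbb L$ case (some pair with a non-pole factor). The only cosmetic difference is organizational: the paper proves (i) first and deduces (ii) from Remark~\ref{rem7}(iv), whereas you establish both dichotomies simultaneously on the $\{\lambda\}$-spectral subspace.
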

 \begin{proof} In the first place, note that according to Remark \ref{rem7}(iv),
statement (i) implies statement (ii).\par

\indent To prove statement (i), let $\lambda\in$ iso $\sigma
(A\otimes B)\setminus \{0\}$. Then, according to Remark
\ref{rem7}(v), there exist $n\in\mathbb N$ and finite spectral sets
$\{\mu\}=\{\mu_1,\ldots ,\mu_n\}\subseteq$ iso $\sigma (A)$ and
$\{\nu\}=\{\nu_1,\ldots ,\nu_n\}\subseteq$ iso $\sigma (B)$ such
that $\lambda=\mu_i\nu_i$ for all $1\le i\le n$. Corresponding to
these spectral sets there are closed subspaces $M_1$, $M_2$ and
$(M_{1i})_{i=1}^n$ of $\X$ invariant for $A$
 and closed subspaces $N_1$, $N_2$ and $(N_{1i})_{i=1}^n $ of $\Y$ invariant for $B$ such that
$\X=M_1\oplus M_2$, $M_1=\oplus_{i=1}^n M_{1i}$,  $\Y=N_1\oplus
N_2$, $N_1=\oplus_{i=1}^n N_{1i}$, $\sigma(A_1)=\{\mu\}$,
$\sigma(A_2)=\sigma(A)\setminus \{\mu\}$,
$\sigma(A_{1i})=\{\mu_i\}$, $\sigma(B_1)=\{\nu\}$,
$\sigma(B_2)=\sigma(B)\setminus \{\nu\}$ and
$\sigma(B_{1i})=\{\nu_i\}$, where $A_1=A\mid_{M_1}$,
$A_2=A\mid_{M_2}$, $A_{1i}=A\mid_{M_{1i}}$, $B_1=B\mid_{N_1}$,
$B_2=B\mid_{N_2}$ and $B_{1i}=B\mid_{N_{1i}}$. Note that $A\otimes
B-\lambda$ is invertible on the closed invariant subspaces
$M_1\overline{\otimes}N_2$, $M_2\overline{\otimes}N_1$,
$M_2\overline{\otimes}N_2$ and $M_{1j}\overline{\otimes}N_{1k}$,
$1\le j\neq k\le n$. Moreover, $\X\overline{\otimes}\Y$ is the
direct sum of these subspaces and $M_{1i}\overline{\otimes}N_{1i}$,
$1\le i\le n$.
\par

\indent Suppose that $\lambda\in\mathbb L$. Then, there exist $\mu\in$ iso $\sigma(A)\setminus \{0\}$
and $\nu\in$ iso $\sigma(B)\setminus \{0\}$ such that $\lambda=\mu\nu$ and either $\mu\in I(A)\setminus  \{0\}$
or $\nu\in I(B)\setminus  \{0\}$. Applying what has been done in the previous paragraph to $\lambda\in \mathbb L$,
there exist an $n=n(\lambda)\in \mathbb N$ and an $i$, $1\le i\le n$, such that
$\mu=\mu_i$ and $\nu=\nu_i$. Therefore, according to Proposition \ref{thm28}(ii) and Remark
 \ref{rem7}(i), $\lambda\in I(A\otimes B)\setminus  \{0\}$.\par

\indent On the other hand, consider  $\lambda\in I(A\otimes B)\setminus  \{0\}$.
As before,
there exist an $n=n(\lambda)\in \mathbb N$ and  $\mu_i\in$ iso $\sigma(A)\setminus \{0\}$
and $\nu_i\in$ iso $\sigma(B)\setminus \{0\}$ such that $\lambda=\mu_i\nu_i$, $i=1,\ldots ,n$.
Now, if $\lambda\notin \mathbb L$, then for each $i=1,\ldots ,n$, $\mu_i\in (\Pi(A)\setminus \{0\})$ and $\nu_i\in (\Pi (B)\setminus \{0\})$.
However, according to Proposition \ref{thm28}(i) and Remark  \ref{rem7}(ii), $\lambda\in (\Pi (A\otimes B)\setminus  \{0\})$,
which is impossible. \par

\indent To prove that  $\mathbb L= I(\ST)\setminus \{0\}$, as in the
tensor product operator case, consider the decompositions of $\X$
and $\Y$ into closed complemented invariant subspaces for $A$ and
$B$ respectively and, as in  Theorem \ref{thm8}, decompose $\ST$ as
a diagonal operator. Then, to conclude the proof, adapt the argument
developed to prove that $\mathbb L= I(A\otimes B)\setminus \{0\}$ to
the case under consideration.
\end{proof}

\indent Applying the main results of this section, it is not difficult to prove that the Drazin spectra of the
tensor product and of  the elementary operator coincide. Note that since the spectra of these operators are equal,
both the set of limit points and  the one of isolated points of the aforementioned operators are identical.\par

\begin{cor}\label{cor48} Let $\X$ and $\Y$ be two Banach spaces and consider
$A\in B(\X)$ and $B\in B(\Y)$. Then, the following statements hold.\par
\noindent \rm (i)\it  $\Pi(A\otimes B)=\Pi(\ST)$.\par
\noindent \rm (ii)\it $I(A\otimes B)=I(\ST)$.\par
\noindent \rm (iii)\it $\sigma_{DR}(A\otimes B)=\sigma_{DR}(\ST)$.
\end{cor}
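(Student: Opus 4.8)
The plan is to prove all three statements simultaneously by reducing everything to the characterizations already established in this section. The key observation is that Corollary \ref{cor48} is essentially a bookkeeping consequence of Theorems \ref{thm8} and \ref{thm9} together with Remark \ref{rem7}(iii). The spectra agree, $\sigma(A\otimes B)=\sigma(\ST)$, so the ambient sets from which poles and isolated-but-not-pole points are carved out are already identical; I only need to match the nonzero parts and the point $0$ separately.

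First I would handle the nonzero parts. By Theorem \ref{thm9}(i) we have $I(A\otimes B)\setminus\{0\}=\mathbb L=I(\ST)\setminus\{0\}$, and by Theorem \ref{thm9}(ii) we have $\Pi(A\otimes B)\setminus\{0\}=(\Pi(A)\setminus\{0\})(\Pi(B)\setminus\{0\})\setminus\mathbb L=\Pi(\ST)\setminus\{0\}$. So statements (i) and (ii) hold after removing the origin, with no further work. The entire content of the corollary therefore concentrates on deciding, consistently for both operators, whether $0$ lies in $\Pi$, in $I$, or in neither.

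Next I would dispose of $0$ using Theorem \ref{thm8}. Note first that if $0\notin\sigma(A\otimes B)=\sigma(\ST)$ there is nothing to check. If $0\in\sigma(A\otimes B)=\sigma(\ST)$ but $0\notin\,\hbox{iso}$, then $0$ belongs to neither $\Pi$ nor $I$ for both operators and again the membership agrees trivially. The remaining case is $0\in\hbox{iso }\sigma(A\otimes B)=\hbox{iso }\sigma(\ST)$, which is precisely the hypothesis of Theorem \ref{thm8}. I would then run through the exhaustive list of possibilities (a), (b), (c) catalogued in Remark \ref{rem7}(vi), translate each into the relevant hypothesis of Theorem \ref{thm8}(i)--(vi), and read off in every case that $0$ lands in the \emph{same} set ($\Pi$, $I$, or is a pole/non-pole) for $A\otimes B$ and for $\ST$. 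Since Theorem \ref{thm8} was stated with parallel conclusions for both operators throughout, each clause delivers the matching membership directly; combining this with the agreement on nonzero points yields $\Pi(A\otimes B)=\Pi(\ST)$ and $I(A\otimes B)=I(\ST)$, which are statements (i) and (ii).

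Finally, statement (iii) follows formally. Since $\sigma(A\otimes B)=\sigma(\ST)$ and, by the identity $\Pi(T)=\sigma(T)\setminus\sigma_{DR}(T)$ recorded in Section 2, one has $\sigma_{DR}(T)=\sigma(T)\setminus\Pi(T)$, the equality $\Pi(A\otimes B)=\Pi(\ST)$ from part (i) immediately gives $\sigma_{DR}(A\otimes B)=\sigma_{DR}(\ST)$. The only place demanding genuine care is the $0$ bookkeeping: I must make sure the case division in Remark \ref{rem7}(vi) is genuinely exhaustive and that each sub-case is correctly paired with the corresponding clause of Theorem \ref{thm8}, since a mismatch there (for instance overlooking the nilpotency side-conditions in (ii) and (vi)) is the one way the argument could silently fail. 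Once that matching is verified, the proof is a short assembly rather than a computation.
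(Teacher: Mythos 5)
Your proposal is correct and follows essentially the same route as the paper, which derives (i)--(ii) from Theorems \ref{thm8} and \ref{thm9} (your case-matching via Remark \ref{rem7}(vi) just makes the bookkeeping explicit, and the nilpotency side-conditions in Theorem \ref{thm8}(ii),(vi) are indeed automatic in the relevant sub-cases since $\sigma(B)\neq\{0\}$, resp.\ $\sigma(A)\neq\{0\}$, rules out nilpotency). For (iii) the paper invokes \cite[Theorem 12]{Bo} while you use the identity $\Pi(T)=\sigma(T)\setminus\sigma_{DR}(T)$ from Section 2, but these amount to the same complement relation between poles and the Drazin spectrum, so the arguments coincide in substance.
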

\begin{proof} Statements (i)-(ii) can be derived from Theorems \ref{thm8} and \ref{thm9}.
To prove statement (iii), apply \cite[Theorem12]{Bo}.
\end{proof}

 \section {\sfstp The B-Weyl spectrum inclusion}\setcounter{df}{0}

\

\indent Recall that given $A\in B(\X)$ and $B\in B(\Y)$ two
operators satisfying Browder's theorem, the  \it Weyl spectrum
equality for $A\otimes B$\rm, i.e., the identity
$$
\sigma_w(A\otimes B)= \sigma (A)\sigma_w(B)\cup\sigma_w (A)\sigma(B),
$$
is equivalent to the the fact that $A\otimes B$
satisfies Browder's theorem (\cite[Theorem 3]{DDK}).
Note that the inclusion
$$
\sigma_w(A\otimes B)\subseteq \sigma (A)\sigma_w(B)\cup\sigma_w (A)\sigma(B)
$$
always  holds, so that the relevant inclusion is the reverse
inclusion ``$\supseteq$".
\par

\indent Similarly, under the same conditions for $A$ and $B$, the \it Weyl spectrum equality for $\ST$\rm, i.e., the identity
$$
\sigma_w(\ST)= \sigma (A)\sigma_w(B)\cup\sigma_w (A)\sigma(B),
$$
is equivalent to the the fact that $\ST$
satisfies Browder's theorem (\cite[Theorem 4.5]{BDJ}). As in the tensor product operator case,
the following inclusion always holds:
$$
\sigma_w(\ST)\subseteq \sigma (A)\sigma_w(B)\cup\sigma_w (A)\sigma(B).
$$

\indent Given  $A\in B(\X)$ and $B\in B(\Y)$ two operators that
satisfy the generalized Browder's theorem, the \it B-Weyl spectrum inclusion for $A\otimes B$ \rm (respectively \it for $\ST$\rm)
will be said to hold, if
$$
\sigma (A)\sigma_{BW}(B)\cup \sigma_{BW}(A)\sigma(B)\subseteq \sigma_{BW}(A\otimes B)
$$
$$
\hbox{\rm (respectively if }\sigma (A)\sigma_{BW}(B)\cup \sigma_{BW}(A)\sigma(B)\subseteq \sigma_{BW}(\ST)).
$$
\indent In this section the B-Weyl spectrum inclusion will be
studied in relation to the  \it transfer property for the
generalized Browder's theorem\rm, i.e., the conditions under which
given $A\in B(\X)$ and $B\in B(\Y)$ two operators that satisfy the\
generalized Browder's theorem, $A\otimes B\in  B(\X \otimes \Y)$ and
$\ST\in B(B(\Y,\X))$ also satisfy the generalized Browder's theorem.
Note that since the Browder's and generalized Browder's theorems are
equivalent (\cite[Theorem 2.1]{AZ}), the results of this section
also provide a characterization of the transfer property for the
Browder's theorem both for the tensor product and the left-right
multiplication operator.\par

\indent In the first place the B-Weyl spectrum  inclusion will be proved
to be an equality, when it holds. However, since for the main
results of this article the relevant condition is an inclusion,  the
B-Weyl spectrum inclusion will be focused on.\par

\begin{lem}\label{lem16} Let $\X$ and $\Y$ be two Banach spaces and consider $A\in B(\X)$ and
$B\in B(\Y)$ two operators that satisfy the generalized Browder's theorem. Then,
$$
  (\sigma_{BW}(A\otimes B)\cup\sigma_{BW}(\ST))\subseteq\sigma(A)\sigma_{BW}(B)\cup\sigma_{BW}(A)\sigma (B).
$$
\end{lem}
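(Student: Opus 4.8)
The plan is to reduce both B-Weyl spectra appearing on the left to the single Drazin spectrum of $A\otimes B$, and then to dissect that Drazin spectrum using the descriptions of isolated points obtained in Section 3. The first observation is that every Drazin invertible operator is B-Weyl: if $T-\lambda$ is Drazin invertible, then either $\lambda\notin\sigma(T)$, or $\lambda$ is a pole and the spectral decomposition at $\lambda$ writes $T-\lambda$ as the direct sum of a nilpotent and an invertible part (Remark \ref{rem7}(ii)), so for large $n$ the range $R((T-\lambda)^n)$ is closed and the restriction there is invertible, hence Weyl. Thus $\sigma_{BW}(A\otimes B)\subseteq\sigma_{DR}(A\otimes B)$ and $\sigma_{BW}(\ST)\subseteq\sigma_{DR}(\ST)$, and by Corollary \ref{cor48}(iii) the two Drazin spectra coincide. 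It therefore suffices to prove $\sigma_{DR}(A\otimes B)\subseteq\sigma(A)\sigma_{BW}(B)\cup\sigma_{BW}(A)\sigma(B)$.

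Next I would record the two consequences of the generalized Browder's theorem that drive everything: since $\sigma_{BW}(A)=\sigma(A)\setminus\Pi(A)=\sigma_{DR}(A)$ (and likewise for $B$), one gets $I(A)\subseteq\sigma_{BW}(A)$, and, via the equivalent formulation $\acc\,\sigma(A)\subseteq\sigma_{BW}(A)$ of \cite[Theorem 2.1(iv)]{CH}, the limit points of $\sigma(A)$ also lie in $\sigma_{BW}(A)$; both hold for $B$ as well. I would then split the Drazin spectrum of the tensor product using $\sigma_{DR}(T)=\sigma(T)\setminus\Pi(T)=\acc\,\sigma(T)\cup I(T)$, so that, with $\sigma(A\otimes B)=\sigma(A)\sigma(B)$,
$$
\sigma_{DR}(A\otimes B)=\acc(\sigma(A)\sigma(B))\cup\big(I(A\otimes B)\setminus\{0\}\big)\cup\big(I(A\otimes B)\cap\{0\}\big),
$$
and bound each of the three pieces separately.

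For the limit-point piece I would use the purely topological inclusion $\acc(KL)\subseteq K\,\acc(L)\cup\acc(K)\,L$, valid for compact $K,L\subseteq\mathbb C$ (a short sequential argument: extract convergent factors of a product sequence and decide, according to which factor eventually stabilises, which compact set contributes the limit point). Applied to $K=\sigma(A)$, $L=\sigma(B)$ and combined with $\acc\,\sigma(A)\subseteq\sigma_{BW}(A)$ and $\acc\,\sigma(B)\subseteq\sigma_{BW}(B)$, this places $\acc(\sigma(A)\sigma(B))$ inside the right-hand side. For the nonzero isolated piece, Theorem \ref{thm9}(i) identifies $I(A\otimes B)\setminus\{0\}$ with $\mathbb L$; writing out the three constituents of $\mathbb L$ and using $I(A)\subseteq\sigma_{BW}(A)$, $I(B)\subseteq\sigma_{BW}(B)$ together with $\Pi(A)\subseteq\sigma(A)$, $\Pi(B)\subseteq\sigma(B)$ shows every product in $\mathbb L$ lands in $\sigma(A)\sigma_{BW}(B)\cup\sigma_{BW}(A)\sigma(B)$.

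The remaining and most delicate piece is the point $0$, since Theorem \ref{thm9} only covers the nonzero part of $I(A\otimes B)$. Here I would invoke the exhaustive case split of Remark \ref{rem7}(vi) together with the conclusions of Theorem \ref{thm8}: whenever $0\in I(A\otimes B)$, the operators fall into one of the configurations of Theorem \ref{thm8}(ii),(iv),(vi), and in each of these $0\in I(A)$ or $0\in I(B)$ (with the other spectrum nonempty), so $0\in\sigma_{BW}(A)$ or $0\in\sigma_{BW}(B)$ and hence $0=0\cdot\beta\in\sigma_{BW}(A)\sigma(B)$ or $0=\alpha\cdot0\in\sigma(A)\sigma_{BW}(B)$. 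Collecting the three bounds yields the inclusion for $\sigma_{DR}(A\otimes B)$, and the reduction of the first paragraph transfers it to both $\sigma_{BW}(A\otimes B)$ and $\sigma_{BW}(\ST)$. I expect the bookkeeping of this $0$-case, matching each sub-configuration of Theorem \ref{thm8} against the right-hand side, to be the main point requiring care.
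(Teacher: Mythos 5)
Your argument is correct, and it takes a genuinely different route from the paper's. The paper works directly with $\sigma_{BW}(A\otimes B)$ in two cases: the point $0$ is handled via \cite[Theorem 2.3]{B3} together with Theorem \ref{thm8} (essentially your $0$-analysis), while each $\lambda\neq 0$ is handled \emph{contrapositively} --- if $\lambda\notin\sigma(A)\sigma_{BW}(B)\cup\sigma_{BW}(A)\sigma(B)$, then every factorization $\lambda=\mu\nu$ runs through $\Pi(A)\times\Pi(B)$, so Theorem \ref{thm9}(ii) makes $\lambda$ a pole of $A\otimes B$, hence $\lambda\notin\sigma_{BW}(A\otimes B)$ --- and the operator $\ST$ is then left to ``a similar argument''. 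You instead prove the formally stronger inclusion $\sigma_{DR}(A\otimes B)\subseteq\sigma(A)\sigma_{BW}(B)\cup\sigma_{BW}(A)\sigma(B)$, which suffices because $\sigma_{BW}\subseteq\sigma_{DR}$ holds for every operator (your Drazin-invertible-implies-B-Weyl observation is sound), and the reduction via Corollary \ref{cor48}(iii) buys a genuine economy: one argument covers $A\otimes B$ and $\ST$ simultaneously, with no repetition. Your forward decomposition $\sigma_{DR}=\hbox{\rm acc}\,\sigma\cup I$ also has the merit of invoking only the robust halves of section 3: the inclusion $I(A\otimes B)\setminus\{0\}\subseteq\mathbb L$ from Theorem \ref{thm9}(i) rather than the pole-product identity of Theorem \ref{thm9}(ii), and your sequential lemma $\hbox{\rm acc}\,(KL)\subseteq K(\hbox{\rm acc}\,L)\cup(\hbox{\rm acc}\,K)L$ for compact $K,L$, which is true (the stabilising-factor dichotomy works at $\lambda=0$ as well, since an eventually constant factor $\mu$ with $\lambda_n\neq\lambda$ forces $\mu\neq 0$) and is in substance the inclusion the paper itself cites from \cite[Theorem 6]{HK} in the proof of Theorem \ref{thm5}. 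The paper's version is shorter only because this topological and spectral-decomposition work is already packaged into Theorem \ref{thm9}. Finally, your zero-case bookkeeping closes correctly: in the configurations of Theorem \ref{thm8}(i), (iii), (v) one gets $0\in\Pi(A\otimes B)$, which is disjoint from $I(A\otimes B)$, so $0\in I(A\otimes B)$ forces one of the configurations (ii), (iv), (vi), each of which yields $0\in I(A)$ or $0\in I(B)$ and hence $0\in\sigma_{BW}(A)\sigma(B)\cup\sigma(A)\sigma_{BW}(B)$, exactly as the paper concludes.
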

\begin{proof} Suppose that $0\in \sigma_{BW}(A\otimes B)$. Then, according
to \cite[Theorem 2.3]{B3}, $0\in $ acc $\sigma(A\otimes B)$ or $0\in I(A\otimes B)$.
Since $A$ and $B$ satisfy the generalized Browder's theorem, if $0\in $ acc $\sigma(A\otimes B)$, then
$0\in $ acc $\sigma(A)\subseteq \sigma_{BW}(A)$ or $0\in $ acc $\sigma(B)\subseteq \sigma_{BW}(B)$.
On the other hand, if $0\in I(A\otimes B)$, then according to Theorem \ref{thm8}, $A$ and
$B$ are not nilpotent and $0\in I(A)\subseteq  \sigma_{BW}(A)$ or $0\in I(B)\subseteq  \sigma_{BW}(B)$.
However, in all these cases $0\in \sigma(A)\sigma_{BW}(B)\cup\sigma_{BW}(A)\sigma (B)$.\par

Next consider $\lambda\neq 0$, $\lambda\in \sigma(A\otimes
B)\setminus (\sigma(A)\sigma_{BW}(B)\cup\sigma_{BW}(A)\sigma (B))$.
In particular,  for each $\mu\in \sigma(A)$ and $\nu\in \sigma(B)$
such that $\lambda=\mu\nu$, $\mu\in\sigma(A)\setminus\sigma_{BW}(A)$
and $\nu\in\sigma(B)\setminus\sigma_{BW}(B)$. However, since $A$ and
$B$ satisfy the generalized Browder's theorem, $\mu \in \Pi(A)$ and
$\nu\in \Pi(B)$. Consequently, according to Theorem \ref{thm9},
$\lambda\in\Pi (A\otimes B)$. Therefore, $\lambda\in\sigma(A\otimes
B)\setminus\sigma_{BW}(A\otimes B)$ (\cite[Theorem 2.3]{B3}).\par

\indent A similar argument proves the inclusion for $\ST$.
\end{proof}
\indent In what follows the transfer property for the generalized Browder's theorem will be studied.\par

\begin{thm}\label{thm5} Let $\X$ and $\Y$ be two Banach spaces and consider $A\in
B(\X)$ and $B\in B(\Y)$ two operators that satisfy the generalized Browder's theorem.
If the B-Weyl spectrum inclusion  for $A\otimes B$ (respectively for $\ST$) holds, then
$A\otimes B$ (respectively $\ST$) satisfies the generalized Browder's theorem.
\end{thm}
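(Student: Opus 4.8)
The plan is to verify the generalized Browder's theorem for $A\otimes B$ (and for $\ST$) through the criterion recalled in Section 2: an operator $T$ satisfies it precisely when $\acc\,\sigma(T)\subseteq\sigma_{BW}(T)$ (\cite[Theorem 2.1(iv)]{CH}). Since $\sigma(A\otimes B)=\sigma(A)\sigma(B)=\sigma(\ST)$ by Remark \ref{rem7}(iii), and since everything in the hypothesis and in the conclusion enters only through the product $\sigma(A)\sigma(B)$ and the set $\sigma(A)\sigma_{BW}(B)\cup\sigma_{BW}(A)\sigma(B)$, I would treat $A\otimes B$ in detail and observe that the argument for $\ST$ is word for word the same. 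The whole statement then reduces to the single inclusion
$$
\acc\,(\sigma(A)\sigma(B))\subseteq\sigma(A)\sigma_{BW}(B)\cup\sigma_{BW}(A)\sigma(B),
$$
because chaining it with the B-Weyl spectrum inclusion hypothesis $\sigma(A)\sigma_{BW}(B)\cup\sigma_{BW}(A)\sigma(B)\subseteq\sigma_{BW}(A\otimes B)$ yields at once $\acc\,\sigma(A\otimes B)\subseteq\sigma_{BW}(A\otimes B)$.

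To prove this inclusion I would fix $\lambda\in\acc\,(\sigma(A)\sigma(B))$ and pick $\mu_n\in\sigma(A)$, $\nu_n\in\sigma(B)$ with $\mu_n\nu_n\to\lambda$ and $\mu_n\nu_n\neq\lambda$. Exploiting the compactness of $\sigma(A)$ and $\sigma(B)$, I would pass to a subsequence with $\mu_n\to\mu_0\in\sigma(A)$ and $\nu_n\to\nu_0\in\sigma(B)$, whence $\mu_0\nu_0=\lambda$. The key remark is that $(\mu_n,\nu_n)\neq(\mu_0,\nu_0)$ for all $n$, since $\mu_n\nu_n\neq\lambda=\mu_0\nu_0$; hence for infinitely many $n$ one has $\mu_n\neq\mu_0$, or for infinitely many $n$ one has $\nu_n\neq\nu_0$. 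In the first situation, restricting to that subsequence yields $\mu_n\to\mu_0$ with $\mu_n\in\sigma(A)\setminus\{\mu_0\}$, so $\mu_0\in\acc\,\sigma(A)$; as $A$ satisfies the generalized Browder's theorem, $\mu_0\in\sigma_{BW}(A)$ and therefore $\lambda=\mu_0\nu_0\in\sigma_{BW}(A)\sigma(B)$. The second situation gives, symmetrically, $\nu_0\in\acc\,\sigma(B)\subseteq\sigma_{BW}(B)$ and $\lambda\in\sigma(A)\sigma_{BW}(B)$. In either case $\lambda$ belongs to $\sigma(A)\sigma_{BW}(B)\cup\sigma_{BW}(A)\sigma(B)$, which is what I wanted.

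A pleasant feature of this compactness argument is that it needs no separate discussion of $\lambda=0$: when $\mu_0\nu_0=0$ exactly one (or both) of the factors vanishes, but the limit-point analysis proceeds identically. The hard part, such as it is, will be the set-theoretic bookkeeping of this step, namely making rigorous the claim that a limit point of the product $\sigma(A)\sigma(B)$ must arise from a genuine limit point of $\sigma(A)$ or of $\sigma(B)$; the subsequence extraction together with the observation $(\mu_n,\nu_n)\neq(\mu_0,\nu_0)$ is precisely what delivers it. Finally I would note that, although Lemma \ref{lem16} upgrades the hypothesised inclusion to an equality, the transfer property itself requires only the one inclusion, used as above in combination with \cite[Theorem 2.1(iv)]{CH}.
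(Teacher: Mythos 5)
Your proof is correct and follows the paper's argument in all essentials: both reduce the theorem, via the criterion of \cite[Theorem 2.1(iv)]{CH}, to the inclusion $\acc\,\sigma(A\otimes B)\subseteq \sigma(A)(\acc\,\sigma(B))\cup(\acc\,\sigma(A))\sigma(B)$, chained with $\acc\,\sigma(A)\subseteq\sigma_{BW}(A)$, $\acc\,\sigma(B)\subseteq\sigma_{BW}(B)$ and the hypothesised B-Weyl spectrum inclusion, the same scheme working verbatim for $\ST$ since $\sigma(\ST)=\sigma(A)\sigma(B)$. The only difference is that the paper cites \cite[Theorem 6]{HK} for that accumulation-point inclusion, whereas you prove it from scratch by a correct compactness and subsequence-extraction argument, making the step self-contained.
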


\begin{proof} According to \cite[Theorem 2.1(iv)]{CH}, acc $\sigma(A)\subseteq \sigma_{BW} (A)$
and  acc $\sigma(B)\subseteq \sigma_{BW} (B)$. Now, since the B-Weyl spectrum
inclusion for $A\otimes B$ holds, according to  \cite[Theorem 6]{HK},
$$
\hbox{\rm acc }\sigma(A\otimes B)\subseteq\sigma (A)(\hbox{\rm acc }\sigma (B))\cup(\hbox{\rm acc }\sigma (A))\sigma (B)\subseteq
\sigma (A)\sigma_{BW}(B)\cup \sigma_{BW}(A)\sigma(B)\subseteq \sigma_{BW}(A\otimes B).
$$
\indent Therefore,  $A\otimes B$ satisfies the generalized Browder's theorem.
Since $\sigma(\ST)=\sigma(A\otimes B)=\sigma(A)\sigma(B)$, the same argument proves
the statement concerning the operator $\ST$.
\end{proof}

\begin{rema}\label{rem6} \rm (i) Note that the converse of Theorem \ref{thm5} does not
in general hold. In fact, let $\X$ and $\Y$ be two Banach spaces and consider
$A\in B(\X)$ and $B\in B(\Y)$ two operators such that $A$ is nilpotent and $B$
satisfies the generalized Browder's theorem. As a result, $A\otimes B\in B(\X\overline{\otimes} \Y)$ is nilpotent,
what is more, $A$ and $A\otimes B$ satisfy the generalized Browder's theorem (the sets of limit points
of these two operators are empty). On the other hand,  since $A$ and $A\otimes B$ are nilpotent,  according to \cite[Theorem 2.3]{B3},
$\sigma_{BW} ( A)=\emptyset=\sigma_{BW} (A\otimes B)$. In particular, necessary
and sufficient for  $\sigma (A)\sigma_{BW}(B)\cup\sigma_{BW}(A)\sigma(B)= \emptyset$
is that $\sigma_{BW}(B)=\emptyset$ (observe, however, that the operators $A$, $B$ and $A\otimes B$
satisfy the equality $\sigma_w(A\otimes B)= \sigma(A)\sigma_w(B)\cup \sigma_w(A)\sigma(B)$). Naturally, the same can be said
for the operator $\ST$.\par

\noindent (ii) Let $\X$ be a Banach space and consider $A\in B(\X)$
an operator that  satisfies the generalized Browder's theorem. According
to \cite[Theorem 3]{Bo}, \cite[Theorem 1.5]{BKMO} and \cite[Theorem
2.7]{BBO}, $\sigma_{BW}(A)=\emptyset$ if and only if $A$ is
algebraic, i.e., there exists  a non-constant polynomial
$P\in\mathbb C[X]$ such that $P(A)=0$. Clearly, since the spectrum
of an algebraic operator is a finite set (actually in this case
$\sigma (A)=\Pi (A)$ (\cite[Theorem 1.5]{BKMO})), algebraic
operators satisfy the generalized Browder's theorem. Moreover, if $A\in
B(\X)$ and $B\in B(\Y)$ are algebraic operators, then
$\sigma(A)\sigma_{BW}(B)\cup\sigma_{BW}(A)\sigma (B)=\emptyset$ and
the B-Weyl spectrum inclusion both for $A\otimes B\in  B(
\X\overline{\otimes}\Y)$ and for $\ST\in  B(B(\Y,\X))$ holds.
Furthermore, it is not difficult to prove, using in particular
Theorems \ref{thm8} and \ref{thm9}, that $A\otimes B$ and $\ST$
satisfy the generalized Browder's theorem and $\sigma_{BW}(A\otimes
B)=\emptyset= \sigma_{BW}(\ST)$. Therefore, to characterize when the
transfer property implies the B-Weyl spectrum inclusion, it is
enough to consider two cases: first, when only one operator is
algebraic (observe that according to (i) the algebraic operator must
not be nilpotent); second, when both operators are not algebraic.
\end{rema}

\indent Before going on, to study the converse of Theorem \ref{thm5} set
$$
\mathbb S=  \sigma(A)\sigma_{BW}(B)\cup\sigma_{BW}(A)\sigma (B).
$$

\begin{thm}\label{thm11}Let $\X$ and $\Y $ be two Banach spaces and consider
$A\in B(\X)$  and $B\in B(\Y)$ such that $A$ is an algebraic but not nilpotent operator and $B$ is a non-algebraic operator that satisfies the generalized
Browder's theorem. Then, if $A\otimes B\in B( \X\overline{\otimes}\Y)$ (respectively if $\ST\in B(B(\Y, \X))$) satisfies the generalized Browder's theorem,
the following statements are equivalent.\par
\noindent \rm (i) \it The B-Weyl spectrum inclusion for $A\otimes B$ (respectively for $\ST$) holds;\par
\noindent \rm (ii) \it $B$ is not Drazin invertible.\par
\noindent Furthermore, if one of the equivalent statements holds, then $\mathbb S=\sigma_{BW}(A\otimes B)$ (respectively
$\mathbb S=\sigma_{BW}(\ST))$,
while if this is not the case, then $\mathbb S=\sigma_{BW}(A\otimes B)\cup \{0\}$ (respectively
$\mathbb S=\sigma_{BW}(\ST)\cup \{0\}$).\par
\end{thm}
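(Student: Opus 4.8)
The plan is to reduce the whole statement to the single point $0$. Since $A$ is algebraic, Remark \ref{rem6}(ii) gives $\sigma_{BW}(A)=\emptyset$, so $\mathbb S=\sigma(A)\sigma_{BW}(B)$; moreover $\sigma(A)=\Pi(A)$ is finite and $\sigma_{BW}(B)\neq\emptyset$ because $B$ is not algebraic. By Lemma \ref{lem16} one always has $\sigma_{BW}(A\otimes B)\subseteq\mathbb S$, so the inclusion in (i) is equivalent to the equality $\mathbb S=\sigma_{BW}(A\otimes B)$; it therefore suffices to compare these two sets, and I expect them to differ at most at $0$, which is exactly what the ``Furthermore'' clause records. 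The same reduction applies to $\ST$: by Corollary \ref{cor48} the operators $A\otimes B$ and $\ST$ share their spectrum, their poles and their Drazin spectrum, so (both satisfying the generalized Browder's theorem) their $B$-Weyl spectra coincide, and I shall only treat $A\otimes B$.

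First I would check that the nonzero parts always agree, i.e. $\mathbb S\setminus\{0\}=\sigma_{BW}(A\otimes B)\setminus\{0\}$. Take $0\neq\lambda=\mu\nu\in\mathbb S$ with $\mu\in\sigma(A)=\Pi(A)$ and $\nu\in\sigma_{BW}(B)=\acc\,\sigma(B)\cup I(B)$, both factors nonzero. If $\nu\in\acc\,\sigma(B)$, multiplying a sequence from $\sigma(B)$ converging to $\nu$ by the nonzero $\mu$ shows $\lambda\in\acc\,\sigma(A\otimes B)$, hence $\lambda\in\sigma_{BW}(A\otimes B)$ because $A\otimes B$ satisfies the generalized Browder's theorem (equivalently $\acc\,\sigma\subseteq\sigma_{BW}$). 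If $\nu\in I(B)$, then $\lambda\in(\Pi(A)\setminus\{0\})(I(B)\setminus\{0\})\subseteq\mathbb L$, so $\lambda\in I(A\otimes B)\subseteq\sigma_{BW}(A\otimes B)$ by Theorem \ref{thm9}(i) and the fact (\cite[Theorem 2.3]{B3}) that an isolated spectral point lies in $\sigma_{BW}$ precisely when it is not a pole. Together with Lemma \ref{lem16} this gives the desired equality off $0$.

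The decisive step is the behaviour at $0$. Using the generalized Browder's theorem for $A\otimes B$ I rewrite $0\in\sigma_{BW}(A\otimes B)$ as ``$0\in\sigma(A\otimes B)$ and $0\notin\Pi(A\otimes B)$'', and I would evaluate this through the case analysis of Theorem \ref{thm8}, organised by the position of $0$ for $A$ (where $\sigma(A)=\Pi(A)$, so $0$ is either off $\sigma(A)$ or a pole) and for $B$. If $B$ is not Drazin invertible, i.e. $0\in\sigma_{BW}(B)$, then $0\in I(B)$ or $0\in\acc\,\sigma(B)$; Theorem \ref{thm8}(iv),(vi) (using that $A$ is not nilpotent and $\sigma(B)\neq\{0\}$) or the limit-point argument of the previous paragraph place $0$ in $I(A\otimes B)$ or in $\acc\,\sigma(A\otimes B)$, whence $0\in\sigma_{BW}(A\otimes B)$. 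If instead $B$ is Drazin invertible, i.e. $0\in\Pi(B)$ or $0\notin\sigma(B)$, then whenever $0\in\sigma(A\otimes B)$ Theorem \ref{thm8}(iii),(v) force $0\in\Pi(A\otimes B)$, so $0\notin\sigma_{BW}(A\otimes B)$. Thus $0\in\sigma_{BW}(A\otimes B)$ if and only if (ii) holds.

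Finally I combine the pieces. As $0\in\Pi(A)$ we have $0=0\cdot\nu\in\mathbb S$ for any $\nu\in\sigma_{BW}(B)$, so $0\in\mathbb S$; hence, using the equality off $0$ from the second paragraph, $\mathbb S=\sigma_{BW}(A\otimes B)$ exactly when $0\in\sigma_{BW}(A\otimes B)$, i.e. exactly when (ii) holds, and otherwise $\mathbb S=\sigma_{BW}(A\otimes B)\cup\{0\}$. With the reduction of the first paragraph this proves (i)$\Leftrightarrow$(ii) together with the two formulas for $\mathbb S$, and the argument for $\ST$ is identical. The main obstacle I anticipate is precisely this point-$0$ computation: one must match, via Theorem \ref{thm8}, the Drazin status of $B$ at $0$ with whether $0$ becomes a pole of $A\otimes B$ (so that it leaves $\sigma_{BW}$) or an isolated non-pole or limit point (so that it stays), while checking that $0\in\mathbb S$ and that the non-nilpotency of $A$ and the non-algebraicity of $B$ keep the relevant tensor summands from degenerating.
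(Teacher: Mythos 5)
Your proposal retraces the paper's own proof almost step for step: the same reduction $\mathbb S=\Pi(A)\sigma_{BW}(B)$, the same off-zero identity $\mathbb S\setminus\{0\}=\sigma_{BW}(A\otimes B)\setminus\{0\}$ obtained from Theorem \ref{thm9} and \cite[Theorem 6]{HK}, and the same case analysis at $0$ through Theorem \ref{thm8}. The one overt leap is the pivotal sentence ``As $0\in\Pi(A)$'': nothing in the hypotheses places $0$ in $\sigma(A)=\Pi(A)$ --- an algebraic, non-nilpotent operator may be invertible, e.g.\ $A=I$. If $0\notin\sigma(A)$ and $B$ is Drazin invertible, then $0\notin\mathbb S$, the implication ``$0\in\mathbb S\Rightarrow 0\in\sigma_{BW}(A\otimes B)$'' is vacuously true, so the inclusion (i) holds while (ii) fails; concretely, $A=I$ and $B=2+V$ with $V$ the Volterra operator ($B$ invertible, non-algebraic, satisfying the generalized Browder's theorem) give $\mathbb S=\{2\}=\sigma_{BW}(A\otimes B)$. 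What your argument actually establishes is (i) $\Leftrightarrow$ ($B$ not Drazin invertible \emph{or} $0\notin\Pi(A)$). In fairness, the paper's proof has exactly the same blind spot: its closing case analysis treats only $0\in\sigma_{BW}(B)$ and $0\in\Pi(A)$ before declaring that ``all the possible cases have been considered,'' so you have faithfully reproduced --- and usefully made explicit --- a tacit assumption ($0\in\Pi(A)$) already present in the paper's own argument.

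Two smaller points where you genuinely diverge from the paper. First, your dismissal of the $\ST$ case via Corollary \ref{cor48} assumes that $A\otimes B$ and $\ST$ \emph{both} satisfy the generalized Browder's theorem, whereas the ``respectively'' reading of the hypothesis grants it only for the operator under consideration, and the two $B$-Weyl spectra need not coincide a priori; the paper instead reruns the argument for $\ST$. Your shortcut is salvageable: if $\ST$ satisfies the generalized Browder's theorem, then $\sigma_{BW}(\ST)=\sigma_{DR}(\ST)=\sigma_{DR}(A\otimes B)=I(A\otimes B)\cup\acc\ \sigma(A\otimes B)$ by Corollary \ref{cor48}(iii) and \cite[Theorem 12]{Bo}, after which your computation for the tensor product applies verbatim --- but as written the justification is off. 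Second, in the step ``$0\in\sigma_{BW}(B)\Rightarrow 0\in\sigma_{BW}(A\otimes B)$,'' your citation of Theorem \ref{thm8}(iv),(vi) together with the parenthetical $\sigma(B)\neq\{0\}$ silently excludes the admissible subcase $\sigma(B)=\{0\}$ ($B$ quasi-nilpotent but not nilpotent, hence non-algebraic), which requires Theorem \ref{thm8}(ii); the paper cites statements (ii), (iv) and (vi) for precisely this reason.
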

\begin{proof} Note that since according to Remark \ref{rem6}(ii) $\sigma (A)=\Pi(A)$, $\mathbb S=\Pi(A)\sigma_{BW}(B)$.
On the other hand, recall that $\sigma_{BW}(B)= I( B)\cup $ acc $\sigma  (B)$ and
$\sigma_{BW}(A\otimes B)= I(A\otimes B)\cup $ acc $\sigma (A\otimes B)$
 (\cite[Theorem 12]{Bo}). In particular,
since $I(A\otimes B)\setminus\{0\}= (\Pi(A)\setminus
\{0\})(I(B)\setminus\{0\}$) (Theorem \ref{thm9}) and acc
$\sigma(A\otimes B)\setminus\{0\}= (\Pi(A))\setminus\{0\})(\hbox{\rm
acc } \sigma(B)\setminus\{0\})$   (\cite[Theorem 6]{HK}), $\mathbb
S\setminus\{0\}=\sigma_{BW} (A\otimes B)\setminus\{0\}$. Therefore,
according to Lemma \ref{lem16}, statement (i) is equivalent to the
following implication: $0\in \mathbb S\Rightarrow 0\in
\sigma_{BW}(A\otimes B)$. Now, $0\in \mathbb S$ if and only if
$0\in \Pi(A)$ or $0\in \sigma_{BW}(B)$. If $0\in  \sigma_{BW}(B)$,
then using in particular statements (ii), (iv) and (vi) of Theorem
\ref{thm8} and the fact that $A$ is not nilpotent, it is not
difficult to prove that $0\in\sigma_{BW}(A\otimes B)$. On the other
hand, if $0\in \Pi(A)$, according to what has been proved, if
$0\in\sigma_{BW}(B)$, then $0\in\sigma_{BW}(A\otimes B)$, while if
$0\in\Pi (B)$ or $0\notin \sigma (B)$, equivalently if $B$ is Drazin
invertible, then according to statements (iii) and (v) of Theorem
\ref{thm8}, $0\in\Pi (A\otimes B) =\sigma (A\otimes
B)\setminus\sigma_{BW}(A\otimes B)$. Consequently, since all the
possible cases have been considered, the B-Weyl spectrum inclusion
for $A\otimes B$ holds if and only if $B$ is not Drazin invertible.
\par \indent The last statement is clear.\par \indent The statements
concerning the operator $\ST \in B(B(\Y, \X))$ can be proved in a
similar way.
\end{proof}

\indent Naturally, under the same conditions as Theorem \ref{thm11},
if the properties of $A$ and $B$ are interchanged, similar
statements can be proved. Next follows the remaining case, i.e.,
when both operators are not algebraic.\par

\begin{thm}\label{thm38}Let $\X$ and $\Y $ be two Banach spaces and consider
$A\in B(\X)$ and $B\in B(\Y)$ two non-algebraic operators that satisfy the generalized Browder's theorem.
Then, if $A\otimes B\in B( \X\overline{\otimes}\Y)$ (respectively if $\ST\in B(B(\Y, \X))$) satisfies the generalized Browder's theorem,
the following statements are equivalent.\par
\noindent \rm (i) \it The B-Weyl spectrum inclusion for $A\otimes B$ (respectively for $\ST$) holds;\par
\noindent \rm (ii)\it  $0\notin\Pi (A\otimes B)$ $(=\Pi (\ST ))$;\par
\noindent \rm (iii)\it $A\otimes  B$ (respectively $\ST$) is invertible or $A\otimes B$ (respectively $\ST$)
is not Drazin invertible.
\noindent Furthermore, if one of the equivalent statements holds, then $\mathbb S=\sigma_{BW}(A\otimes B)$ (respectively
$\mathbb S=\sigma_{BW}(\ST))$,
while if this is not the case, then $\mathbb S=\sigma_{BW}(A\otimes B)\cup \{0\}$ (respectively
$\mathbb S=\sigma_{BW}(\ST)\cup \{0\}$).\par
\end{thm}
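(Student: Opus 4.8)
The plan is to prove Theorem \ref{thm38} by reducing it to the structure already established in the previous results, exploiting that both operators are non-algebraic and satisfy the generalized Browder's theorem. First I would record the equivalence of the spectral invariants: by Remark \ref{rem6}(ii), since $A$ and $B$ are non-algebraic, $\sigma_{BW}(A)\neq\emptyset$ and $\sigma_{BW}(B)\neq\emptyset$, and by Corollary \ref{cor48}(i) we have $\Pi(A\otimes B)=\Pi(\ST)$, which justifies the parenthetical identity in statement (ii). The equivalence of (ii) and (iii) is essentially a restatement: by \cite[Theorem 12]{Bo} and the definition of the poles, $0\in\Pi(A\otimes B)$ means $0$ is an isolated point of the spectrum at which $A\otimes B$ is Drazin invertible, so $0\notin\Pi(A\otimes B)$ says precisely that either $0\notin\sigma(A\otimes B)$ (i.e.\ $A\otimes B$ is invertible) or $A\otimes B$ fails to be Drazin invertible at $0$; I would spell this out using $\Pi(A\otimes B)=\sigma(A\otimes B)\setminus\sigma_{DR}(A\otimes B)$.

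The core of the argument is the equivalence of (i) and (ii), and here I would follow the template of Theorem \ref{thm11}. By Lemma \ref{lem16} the inclusion ``$\subseteq$'' always holds, so the B-Weyl spectrum inclusion for $A\otimes B$ amounts to the reverse inclusion $\mathbb{S}\subseteq\sigma_{BW}(A\otimes B)$. The key reduction is to show $\mathbb{S}\setminus\{0\}=\sigma_{BW}(A\otimes B)\setminus\{0\}$ automatically, so that the inclusion reduces to the single implication $0\in\mathbb{S}\Rightarrow 0\in\sigma_{BW}(A\otimes B)$. For the nonzero part I would combine the decomposition $\sigma_{BW}(A\otimes B)=I(A\otimes B)\cup\,\mathrm{acc}\,\sigma(A\otimes B)$ from \cite[Theorem 12]{Bo}, the identity $I(A\otimes B)\setminus\{0\}=\mathbb{L}$ from Theorem \ref{thm9}(i), and the description of $\mathrm{acc}\,\sigma(A\otimes B)\setminus\{0\}$ from \cite[Theorem 6]{HK}, together with the fact that $A,B$ satisfy the generalized Browder's theorem (so $\sigma_{BW}=I\cup\mathrm{acc}\,\sigma$ for each). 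Matching these against $\mathbb{S}\setminus\{0\}=(\sigma(A)\sigma_{BW}(B)\cup\sigma_{BW}(A)\sigma(B))\setminus\{0\}$ should give the desired coincidence off zero, exactly as in Theorem \ref{thm11}.

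It then remains to analyze the point $0$, and this is where the non-algebraic hypothesis on \emph{both} operators does the work. Since $\sigma_{BW}(A)$ and $\sigma_{BW}(B)$ are both nonempty, I would argue that $0\in\mathbb{S}$ holds unconditionally: choosing $\mu\in\sigma_{BW}(A)\subseteq\sigma(A)$ and any $\nu\in\sigma(B)$ (and symmetrically), the products $\mu\nu$ populate $\mathbb{S}$, and in particular one can always realize $0\in\mathbb{S}$ precisely when $0\in\sigma(A)\cup\sigma(B)$ intersects the relevant B-Weyl sets — I would verify that under the non-algebraic assumption $0\in\mathbb{S}$ is equivalent to $0\in\sigma(A\otimes B)$. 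Granting that $0\in\mathbb{S}$, the inclusion (i) is then equivalent to $0\in\sigma_{BW}(A\otimes B)$, which by \cite[Theorem 2.3]{B3} and the relation $\sigma(A\otimes B)\setminus\sigma_{BW}(A\otimes B)=\Pi(A\otimes B)$ is equivalent to $0\notin\Pi(A\otimes B)$, giving (i)$\Leftrightarrow$(ii). The final ``furthermore'' clause then follows immediately: when the equivalent conditions hold, $0$ is accounted for and $\mathbb{S}=\sigma_{BW}(A\otimes B)$, whereas when they fail, $0\in\mathbb{S}\setminus\sigma_{BW}(A\otimes B)=\{0\}$ so $\mathbb{S}=\sigma_{BW}(A\otimes B)\cup\{0\}$. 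I expect the main obstacle to be the careful bookkeeping at $0$: unlike Theorem \ref{thm11}, where one factor is algebraic and forces $\sigma(A)=\Pi(A)$, here I must systematically invoke the six cases of Theorem \ref{thm8} to decide when $0\in\sigma_{BW}(A\otimes B)$ versus $0\in\Pi(A\otimes B)$, and confirm that the non-algebraicity of both factors eliminates the degenerate possibilities and makes $0\in\mathbb{S}$ automatic; the operator $\ST$ case then follows verbatim since $\sigma(\ST)=\sigma(A\otimes B)$ and Corollary \ref{cor48} equates the poles, isolated points, and Drazin spectra of the two operators.
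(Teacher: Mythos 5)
Your proposal is correct and in substance coincides with the paper's own proof: it relies on exactly the same ingredients --- $\sigma_{BW}=I\cup\hbox{\rm acc }\sigma$ for $A$, $B$ and $A\otimes B$ from \cite[Theorem 12]{Bo}, the identity $I(A\otimes B)\setminus\{0\}=\mathbb L$ of Theorem \ref{thm9}, the description of $\hbox{\rm acc }\sigma(A\otimes B)$ from \cite[Theorem 6]{HK}, Lemma \ref{lem16}, and non-algebraicity supplying $\sigma_{BW}(A)\neq\emptyset\neq\sigma_{BW}(B)$ via Remark \ref{rem6}(ii), with Corollary \ref{cor48} handling the $\ST$ case. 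The only difference is organizational rather than mathematical: you transplant the Theorem \ref{thm11} template (prove $\mathbb S\setminus\{0\}=\sigma_{BW}(A\otimes B)\setminus\{0\}$ once, verify $0\in\mathbb S\Leftrightarrow 0\in\sigma(A\otimes B)$, and obtain (i)$\Leftrightarrow$(ii) directly, with (ii)$\Leftrightarrow$(iii) from $\Pi=\sigma\setminus\sigma_{DR}$ and the ``furthermore'' clause falling out immediately), whereas the paper proves the cycle (i)$\Rightarrow$(ii)$\Rightarrow$(iii)$\Rightarrow$(i), running a three-case analysis on the position of $0$ ($0\notin\sigma(A\otimes B)$, $0\in\hbox{\rm acc }\sigma(A\otimes B)$, $0\in I(A\otimes B)$) for (iii)$\Rightarrow$(i) --- your reduction is a sound, slightly streamlined repackaging of the same argument (your passing phrase that $0\in\mathbb S$ holds ``unconditionally'' is inaccurate, but you immediately correct it to the right claim $0\in\mathbb S\Leftrightarrow 0\in\sigma(A\otimes B)$, which is what the argument actually uses).
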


\begin{proof} Consider  the operator $A\otimes B\in B(\X\overline{\otimes} \Y)$.
Recall that since  $A$, $B$ and $A\otimes B$ satisfy the generalized
Browder's theorem, according to \cite[Theorem 12]{Bo},
$\sigma_{BW}(A)= I(A)\cup $ acc $\sigma (A)$, $\sigma_{BW}(B)=
I(B)\cup $ acc $\sigma (B)$ and $\sigma_{BW}(A\otimes B)= I(A\otimes
B)\cup $ acc $\sigma (A\otimes B)$. Now set $\mathbb A=$ (acc
$\sigma (A))\sigma (B)\cup \sigma (A)$(acc $\sigma (B))$ and
$\mathbb B =I(A)I(B)\cup I(A)\Pi (B)\cup \Pi (A)I(B)$. Note that
$\mathbb B\setminus \{0\}=\mathbb L=I(A\otimes B)\setminus \{0\}$
(Theorem \ref{thm9}) and $\mathbb S= \mathbb A \cup \mathbb B$. \par
\noindent (i) $\Rightarrow$ (ii). Suppose that $0\in\Pi (A\otimes
B)\subseteq\sigma(A\otimes B)=\sigma(A)\sigma(B)$. Then, since
neither $A$ nor $B$ is algebraic, $0\in\mathbb
S\subseteq\sigma_{BW}(A\otimes B)$, which is impossible for
$\Pi(A\otimes B)=\sigma(A\otimes B)\setminus\sigma_{BW}(A\otimes
B)$.\par \noindent (ii) $\Rightarrow$ (iii). Apply \cite[Theorem
12]{Bo}.\par \noindent (iii) $\Rightarrow$ (i). Note that three cases 
must be considered: $0\notin \sigma (A\otimes B)$;  $0\in$
acc $\sigma(A\otimes B)$;  $0\in I(A\otimes B)$.
Suppose that
$0\notin \sigma (A\otimes B)$ or $0\in$ acc $\sigma(A\otimes B)$.
Then, according
to \cite[Theorem 6]{HK}, acc $\sigma(A\otimes B)=\mathbb A$. If
$0\notin \sigma(A\otimes B)$, then $\mathbb B=I(A\otimes B)$, in
particular, $\mathbb S= \sigma_{BW}(A\otimes B)$, while if $0\in$
acc $\sigma (A\otimes B)=\mathbb A$, since $\mathbb B\setminus
\{0\}=I(A\otimes B)$, $\mathbb S= \sigma_{BW}(A\otimes B)$.\par
\indent  Next
suppose that $0\in I(A\otimes B)\subseteq \sigma_{BW}(A\otimes
B)\subseteq \sigma(A\otimes B)$. Since neither $A$ nor $B$ is
algebraic, $0\in\mathbb S$. Moreover, since $\mathbb
B\setminus\{0\}=I(A\otimes B)\setminus\{0\}$ and acc
$\sigma(A\otimes B)=\mathbb A\setminus\{0\}$ (\cite[Theorem 6]{HK}),
$\mathbb S=\sigma_{BW}(A\otimes B)$. \par \indent Concerning the
last statement, according to Lemma \ref{lem16}, it is enough to
consider the case $\sigma_{BW}(A\otimes B)\subsetneq\mathbb S$.
Suppose that $0\in \Pi (A\otimes B)$. Then, since neither $A$ nor
$B$ is algebraic, $0\in\mathbb S\setminus\sigma_{BW}(A\otimes B)$.
However, since acc $\sigma(A\otimes B)=\mathbb A\setminus \{0\}$
(\cite[Theorem 6]{HK}) and $I(A\otimes B)\setminus\{0\}=\mathbb
B\setminus \{0\}$,
 $\mathbb S=\sigma_{BW}(A\otimes B)\cup \{0\}$.\par
\indent Finally, a similar argument proves the statements concerning the left-right multiplication operator.
\end{proof}

\begin{rema}\label{rem16} \rm Note that under the same hypotheses as Theorems \ref{thm11} and \ref{thm38},
if $\sigma_{BW}(A\otimes B)\subsetneq  \sigma(A)\sigma_{BW}(B)\cup\sigma_{BW}(A)\sigma (B)$,
then $\sigma(A)\sigma_{BW}(B)\cup\sigma_{BW}(A)\sigma (B)=\sigma_{BW}(A\otimes B)\cup\{0\}$.
Moreover, a similar observation holds for $\ST$.
\end{rema}

\indent In the following theorem the  transfer property for the
generalized Browder's theorem will be characterized. Note that if an
operator is not Drazin invertible, then it is not algebraic. Recall
that according to \cite[Theorem 2.1]{AZ}, Browder's theorem and the
generalized Browder's theorem are equivalent. Moreover, recall that
Browder's theorem both for the tensor product operator and for  the
elementary operator is equivalent to the respective Weyl spectrum
equality, see \cite[Theorem 3]{DDK} and \cite[Theorem 4.5]{BDJ}
respectively.\par

\begin{thm}\label{thm14}Let $\X$ and $\Y $ be two Banach spaces and consider
$A\in B(\X)$ and $B\in B(\Y)$ two operators  that satisfy the
generalized Browder's theorem. Suppose either that $A$ and $B$ are
not algebraic  and $0\notin \Pi (A\otimes B)$ $(= \Pi(\ST))$ or that
only one of them, say $A$, is algebraic but not nilpotent and the
other, say $B$, is not Drazin invertible.\par \noindent (a). The
following statements are equivalent.\par \noindent \rm  (i)\it
The (generalized) Browder's theorem for $A\otimes B\in
B(\X\overline{\otimes}\Y)$ holds;\par \noindent \rm (ii)\it
$\sigma_{BW}(A\otimes B)=\sigma (A)\sigma_{BW}( B)\cup
\sigma_{BW}(A)\sigma(B)$.\par \noindent \rm (iii)\it
$\sigma_w(A\otimes B)= \sigma_w(A)\sigma(B)\cup
\sigma(A)\sigma_w(B)$.\par \noindent (b). The following statements
are equivalent.\par \noindent \rm (i)\it The (generalized) Browder's
theorem for $ \ST\in B(B(\Y,\X))$ holds;\par \noindent \rm (ii)\it
$\sigma_{BW}(\ST)=\sigma (A)\sigma_{BW}( B)\cup
\sigma_{BW}(A)\sigma(B)$.\par \noindent \rm (iii)\it $\sigma_w(\ST)=
\sigma_w(A)\sigma(B)\cup \sigma(A)\sigma_w(B)$.\par
\end{thm}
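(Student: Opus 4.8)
The plan is to reduce everything to results already in hand, with the real content concentrated in the equivalence (i) $\Leftrightarrow$ (ii); the equivalence (i) $\Leftrightarrow$ (iii) will be essentially immediate. I would prove part (a) in full and then observe that part (b) is the verbatim same argument with $\ST$ in place of $A\otimes B$. Throughout, the two disjunctive hypotheses on $A$ and $B$ are exactly the conditions needed to invoke Theorems \ref{thm11} and \ref{thm38}, and recognizing this is the only delicate point.

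First I would settle (i) $\Leftrightarrow$ (iii). Since $A$ and $B$ satisfy the generalized Browder's theorem, by \cite[Theorem 2.1]{AZ} they satisfy Browder's theorem. Hence \cite[Theorem 3]{DDK} applies: the Weyl spectrum equality (iii) holds if and only if $A\otimes B$ satisfies Browder's theorem, and the latter is again equivalent, by \cite[Theorem 2.1]{AZ}, to $A\otimes B$ satisfying the generalized Browder's theorem, which is (i). Observe that this step uses neither of the disjunctive hypotheses. For $\ST$ the identical chain runs through \cite[Theorem 4.5]{BDJ} instead of \cite[Theorem 3]{DDK}, using in addition that $\sigma(\ST)=\sigma(A\otimes B)$.

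Next I would prove (i) $\Leftrightarrow$ (ii). For (ii) $\Rightarrow$ (i): the equality in (ii) yields in particular the B-Weyl spectrum inclusion $\mathbb S\subseteq\sigma_{BW}(A\otimes B)$, so Theorem \ref{thm5} gives that $A\otimes B$ satisfies the generalized Browder's theorem, which is (i). For (i) $\Rightarrow$ (ii): Lemma \ref{lem16} always supplies $\sigma_{BW}(A\otimes B)\subseteq\mathbb S$, so it remains to produce the reverse inclusion, and this is where the hypotheses split into the two cases. If $A$ and $B$ are both non-algebraic with $0\notin\Pi(A\otimes B)$, then, since $A\otimes B$ satisfies the generalized Browder's theorem by (i), the equivalence of statements (i) and (ii) in Theorem \ref{thm38} applies and, the hypothesis $0\notin\Pi(A\otimes B)$ being precisely Theorem \ref{thm38}(ii), yields the B-Weyl spectrum inclusion. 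If instead $A$ is algebraic but not nilpotent and $B$ is not Drazin invertible, then $B$ is non-algebraic (a non-Drazin-invertible operator cannot be algebraic), so Theorem \ref{thm11} applies, the condition ``$B$ not Drazin invertible'' being exactly its statement (ii); again the B-Weyl spectrum inclusion follows. In either case $\mathbb S\subseteq\sigma_{BW}(A\otimes B)$, which combined with Lemma \ref{lem16} gives the equality (ii).

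The main obstacle, such as it is, is purely bookkeeping: one must match the two alternative hypotheses of the theorem against the conditions (ii) of Theorems \ref{thm11} and \ref{thm38}, so that under (i) the corresponding B-Weyl spectrum inclusion becomes available; there is no genuinely new estimate to carry out. Finally, part (b) follows by repeating the argument with $\ST$ throughout, using $\sigma(\ST)=\sigma(A\otimes B)$ together with Corollary \ref{cor48} (so that $\Pi(\ST)=\Pi(A\otimes B)$ and the hypothesis $0\notin\Pi(A\otimes B)=\Pi(\ST)$ is literally the same condition), the $\ST$-versions of Theorems \ref{thm5}, \ref{thm11}, \ref{thm38} and of Lemma \ref{lem16}, and \cite[Theorem 4.5]{BDJ} in the (i) $\Leftrightarrow$ (iii) step.
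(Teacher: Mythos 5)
Your proposal is correct and follows essentially the same route as the paper, whose entire proof reads ``Apply Theorems \ref{thm5}, \ref{thm11} and \ref{thm38}'': you use Theorem \ref{thm5} for (ii)$\Rightarrow$(i), Theorems \ref{thm11} and \ref{thm38} (whose statements (ii) match the theorem's two alternative hypotheses, including the observation that non-Drazin-invertible implies non-algebraic) together with Lemma \ref{lem16} for (i)$\Rightarrow$(ii), and the equivalences from \cite[Theorem 2.1]{AZ}, \cite[Theorem 3]{DDK} and \cite[Theorem 4.5]{BDJ} --- exactly the facts the paper recalls immediately before the theorem --- for (i)$\Leftrightarrow$(iii). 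The only cosmetic difference is that the paper's ``Furthermore'' clauses in Theorems \ref{thm11} and \ref{thm38} already yield $\mathbb S=\sigma_{BW}(A\otimes B)$ directly, so your separate appeal to Lemma \ref{lem16} is sound but slightly redundant.
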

 \begin{proof} Apply Theorems \ref{thm5}, \ref{thm11} and \ref{thm38}.
 \end{proof}

\vskip.3truecm
\noindent Enrico Boasso\par
\noindent E-mail address: enrico\_odisseo@yahoo.it\par
\vskip.3truecm
\noindent B. P. Duggal\par
\noindent 8 Redwood Grove, Northfield Avenue,\par
\noindent Ealing, London W5 4SZ,  United Kingdom\par
\noindent E-mail address: duggalbp@gmail.com

\end{document}